\def\blfootnote{\xdef\@thefnmark{}\@footnotetext}
\newtheorem{thm}{Theorem}[section]
\newtheorem{cor}[thm]{Corollary}
\newtheorem{lem}[thm]{Lemma}
\newtheorem{prop}[thm]{Proposition}
\theoremstyle{definition}
\newtheorem{deff}[thm]{Definition}
\newtheorem{rmk}[thm]{Remark}
\newtheorem{exm}[thm]{Example}
\newcommand{\N}{\mathbb N}
\newcommand{\C}{\mathbb C}
\newcommand{\R}{\mathbb R}
\newcommand{\la}{\langle}
\newcommand{\ra}{\rangle}
\newcommand{\hi}{\mathcal H}
\newcommand{\ki}{\mathcal K}
\begin{document}

\title[Closedness of generators]{The closedness of the generator of a semigroup}

\author[Androulakis]{George Androulakis}
\address{Department of Mathematics\\
University of South Carolina \\ 
Columbia, SC}
\email{giorgis@math.sc.edu}

\author[Ziemke]{Matthew Ziemke}
\address{Department of Mathematics\\
University of South Carolina \\ 
Columbia, SC}
\email{ziemke@email.sc.edu}

\blfootnote{The article is part of the 
second author's Ph.D. thesis which is prepared at the University of South Carolina under the supervision of the first author.}

\maketitle

\begin{abstract}
We study semigroups of bounded operators on a Banach space such that the members of the semigroup are continuous with respect to various weak topologies and we give sufficient conditions for the generator of the semigroup to be closed with respect to the topologies involved.  The proofs of these results use the Laplace transforms of the semigroup.  Thus we first give sufficient conditions for Pettis integrability of vector valued functions with respect to scalar measures.
\end{abstract}

\begin{spacing}{1.3}

\section{Motivation}\label{motivation}
The motivation for this article comes from the area of quantum Markov semigroups (QMSs) and the well known problem of describing the form of the generator of a general QMS.  A semigroup on a Banach space $X$ is a family $(T_t)_{t \geq 0}$ of bounded operators on $X$ satisfying $T_0=1$ and $T_tT_s=T_{t+s}$ for $t,s \geq 0$.

A QMS on a von Neumann algebra $\mathcal{M}$ is a semigroup $(T_t)_{t \geq 0}$ on $\mathcal{M}$ such that each $T_t$ is completely positive, $\sigma$-weakly-$\sigma$-weakly continuous, and satisfies $T_t(1)=1$, and the map $t \mapsto T_tA$ is $\sigma$-weakly continuous for each $A \in \mathcal{M}$.  This definition was finalized in 1976 by Lindblad \cite{lindblad} who was able to describe the form of the generator of the semigroup in the case that the von Neumann algebra $\mathcal{M}$ is semifinite and the map $t \mapsto T_t$ is norm continuous.  Such semigroups are called uniformly continuous.  Lindblad proved that for such QMSs the generator $L$ has the form $L(A)=\phi (A)+G^*A+AG$ for all $A \in \mathcal{M}$, where $G \in \mathcal{M}$ and $\phi$ is a $\sigma$-weakly-$\sigma$-weakly continuous and completely positive map on $\mathcal{M}$.

Then by a result of Stinespring \cite{stinespring}, every completely positive map $\phi$ can be written in the form $\phi (A)=V^* \pi (A)V$, where $\pi : \mathcal{M} \rightarrow \mathcal{B}(\ki)$ is a $^*$-representation of $\mathcal{M}$ into the algebra $\mathcal{B}(\ki)$ of all bounded linear operators on a Hilbert space $\ki$ and $V$ is a bounded linear operator from $\hi$ to $\ki$ (where $\hi$ is the Hilbert space on which $\mathcal{M}$ acts).  Moreover the $\sigma$-weak-$\sigma$-weak continuity of $\phi$ implies $\sigma$-weak-$\sigma$-weak continuity of $\pi$.  Finally by a result of Kraus \cite{kraus} every $\sigma$-weakly-$\sigma$-weakly continuous $^*$-representation $\pi : \mathcal{M} \rightarrow \mathcal{B}(\ki)$ can be written in the form $\pi (A)= \sum_{j=1}^{\infty} V_j^*AV_j$ where $(V_j)_{j=1}^{\infty}$ is a sequence of bounded operators from $\ki$ to $\hi$ and the series $\sum_{j=1}^{\infty} V_j^*AV_j$ converges strongly.

Extensions of the result of Lindblad were made by Davies \cite{davies1, davies2}, Holevo \cite{holevo}, and recently by the authors \cite{az}.  The article \cite{az} studies QMSs on the von Neuman algebra $\mathcal{B}(\hi)$ (where $\hi$ is a Hilbert space) without assuming that the semigroup is uniformly continuous.  An important tool in the work of \cite{az} is the domain algebra $\mathcal{A}$ of the generator $L$ of a semigroup, which was introduced by Arveson \cite{arveson}.  Let $\hi$ be a Hilbert space, $(T_t)_{t \geq 0}$ be a QMS on $\hi$, $L$ be its generator and $D(L)$ be the domain of $L$.  The domain algebra $\mathcal{A}$ is the largest$^*$-subalgebra of $D(L)$ and it's given by
$$\mathcal{A}= \{ A \in D(L): A^*A \text{ and }AA^* \text{ belong to } D(L) \} .$$
Let $e \in \hi$ such that $|e \ra \la e| \in D(L)$, and set
$$U_e= \{ x \in \hi : |x \ra \la e| \in \mathcal{A} \} .$$
For simplicity assume that $\hi=L_2(\R)$, $e$ is a smooth function on $\R$ with compact support, and $U_e$ is the Schwartz class of functions on $\R$ which will hereafter be denoted by $U$ (indeed, it can be easily verified that the Schwartz class is a subspace of $U_e$ in Example \ref{example1} if $\hi=L_2(\R)$, $V_t$ is the translation by $t$ i.e. $V_t(g)(x)=g(x-t)$ for $g \in \hi$, and $e$ is a smooth function with compact support).
The main result of \cite{az} states that there exists a Hilbert space $\ki$, a unital $^*$-representation $\pi : \mathcal{A} \rightarrow \mathcal{B}(\ki)$ and linear maps $G:U \rightarrow \hi$ and $V:U \rightarrow \ki$ such that
\begin{equation}\label{result}
\la x, L(A)y \ra_{\hi} = \la Vx, \pi (A)Vy \ra_{\ki} + \la x , GAy \ra_{\hi} + \la GA^*x,y \ra_{\hi} \quad \text{for all } A \in \mathcal{A} \text{ and } x,y \in U,
\end{equation}
where
$$Gx=L(|x \ra \la e|)e- \frac{1}{2} \la e, L(|e \ra \la e|)e \ra x , \quad \text{for all } x \in U .$$
This result is the analogue of the result of Lindblad \cite{lindblad} without incorporating Kraus' result.  Equation \eqref{result} is an equality of bilinear forms on $U$.  This equation can be turned into an equation of operators as follows:  Assume that
\begin{equation}\label{assumption}
\forall x \in U, \, \exists C_x>0 \text{ such that } | \la x, L( |y \ra \la e|)e \ra | \leq C_x \| y \| \quad \text{for all } y \in U.
\end{equation}
Then notice that

(i) $U$ is a subset of the domain $Dom(G^*)$ of $G^*$, and

(ii) for every $y \in U$ and $A \in \mathcal{A}$ we have that $\pi(A)Vy \in Dom(V^*)$.

\noindent Indeed, if we fix $x \in U$, then for any $y \in U$, we have
\begin{align*}
| \la x, Gy \ra | & = | \la x, L( |y \ra \la e|)e \ra -\frac{1}{2} \la e, L(|e\ra \la e|)e \ra \la x, y \ra  |
\\ & \leq C_x \| y \| + \frac{1}{2} | \la e , L(|e \ra \la e|) e \ra | \| x \| \| y \|
\\ & = \left( C_x + \frac{1}{2} | \la e , L(|e \ra \la e|) e \ra | \| x \| \right) \| y \|.
\end{align*}
Hence, $x \in Dom(G^*)$.  Next, if we fix $A \in \mathcal{A}$ and $x \in U$, then for any $y \in U$,
\begin{align*}
| \la \pi (A)Vx, Vy \ra_{\ki}| & = | \la Vy, \pi (A)Vx \ra_{\ki} |
\\ & \leq | \la y, L(A)x \ra | + | \la y , GAx \ra | + | \la GA^*y,x \ra | .
\end{align*}
Further,
$$| \la y, L(A)x \ra | \leq \| y \| \| L(A)x \| ,$$
also,
$$| \la y , GAx \ra | \leq \| y \| \| GAx \| ,$$
and lastly,
$$ | \la GA^*y,x \ra | = | \la A^*y, G^*x \ra | = | \la y , AG^*x \ra | \leq \| y \| \| AG^*x \| .$$
Hence, $\pi (A)Vx \in Dom(V^*)$.

Statements (i) and (ii) imply that Equation \eqref{result} can be equivalently rewritten as
\begin{equation}\label{result2}
\la x, L(A)y \ra_{\hi} = \la x, \left(V^*\pi (A)V+GA+AG^* \right)y \ra_{\hi} \quad \text{for all } A \in \mathcal{A} \text{ and } x,y \in U.
\end{equation}
If we further assume that
\begin{equation}\label{assumption2}
\text{ the linear subspace } U \text{ is dense in } \hi
\end{equation}
then Equation \eqref{result2} can be equivalently written as 
\begin{equation}\label{result3}
L(A)y= \left( V^* \pi (A)V + GA+AG^* \right)y \text{ for all } A \in \mathcal{A} \text{ and } y \in U.
\end{equation}
Thus, under assumptions \eqref{assumption} and \eqref{assumption2} the equality \eqref{result} of sesquilinear forms becomes equality \eqref{result3} of operators.

The operators $V:U \rightarrow \hi$ and $G:U \rightarrow \ki$ and their adjoints that appear in \eqref{result3} are not necessarily bounded.  Let $\mathcal{L}(U)$ denote the set of all (not necessarily bounded) linear operators on $\hi$ whose domains contain the linear subspace $U$ of $\hi$.  In order to prove that the $^*$-representation $\pi$ which appears in \eqref{result3} is continuous with respect to some topologies one needs to show that the map $\varphi : \mathcal{A} \rightarrow \mathcal{L}(U)$ defined by
\begin{equation}\label{varphidef}
 \varphi (A)=V^*AV, \quad A \in \mathcal{A}
 \end{equation}
is closed with respect to appropriate topologies.  Given a QMS $(T_t)_{t \geq 0}$, the topologies with respect to which each map $T_t$ is continuous, are the same as the topologies used for the closedness of the map $\varphi$ defined in \eqref{varphidef}.  Even though for a general QMS it is assumed that every $T_t$ is $\sigma$-weakly-$\sigma$-weakly continuous, the $\sigma$-weak topology can not be naturally defined in the set $\mathcal{L}(U)$ of generally unbounded operators (which contains the range of the map $\varphi$).  On the other hand, a natural topology on the space $\mathcal{L}(U)$ of linear operators whose domain contains the dense linear subspace $U$, is the {\bf weak operator topology on $U$}, which can be denoted by $WOT(U)$ and it is the locally convex topology generated by the family of seminorms $(p_{x,y})_{x,y \in U}$ where, for $x,y \in U$ and $T \in \mathcal{L}(U)$, we define $p_{x,y}(T)= | \la x, Ty \ra |$.  If $\hi$ is a Hilbert space and $U$ is a linear subspace of $\hi$ then the $WOT(U)$ topology on $\mathcal{B}(\hi)$ is the $\sigma(\mathcal{B}(\hi) , F_U)$ topology where $F_U$ is the linear span of the set $\{ |x \ra \la y|:x,y \in U \}$ which consists of rank-1 operators.  The set $F_U$ is considered as a subset of $\mathcal{B}(\hi)^*$ but since it consists of finite rank operators we can consider it as a subset of the trace class operators $L_1 (\hi)$ which acts on $\mathcal{B}(\hi)$ by trace duality.  It can be easily proved that the assumption \eqref{assumption2} is equivalent to the fact that $F_U$ is dense in $L_1(\hi)$ hence if $\overline{F_U}$ denotes the closure of $F_U$ in $\mathcal{B}(\hi)^*$ then the $\sigma(\mathcal{B}(\hi), \overline{F_U})$ topology is the $\sigma$-weak topology on $\mathcal{B}(\hi)$.

In this article we study semigroups on a Banach space $X$ such that each member of the semigroup is $\sigma(X,F)$-$\sigma(X,F)$ continuous where $F$ is a linear subspace of $X^*$, and we prove that the generator of the semigroup is closed with respect to these topologies.  The main results of the article are Theorems \ref{15} and \ref{19} and Corollary~\ref{20} which give sufficient conditions for the generator of a semigroup to be closed with respect to various weak topologies.  The main tools for proving these results are Theorems \ref{5} and \ref{8} which provide sufficient conditions for a normed space valued function to be Pettis integrable.

\section{Sufficient conditions for Pettis integrability}\label{generalpettis}
In this section we first recall Pettis integrability of normed space valued functions as in \cite{du}.  The main results of the section are Theorems \ref{5} and \ref{8} which give sufficient conditions for a function to be Pettis integrable.  These theorems are similar to results in \cite{kunze}.
\begin{deff}
Let $(\Omega , \Sigma , \mu)$ be a measure space.  Let $X$ be a normed space and let $F$ be a linear subspace of $X^*$.  If $f: \Omega \rightarrow X$ is such that $\eta \circ f$ is $\Sigma$-measurable for all $\eta \in F$ then we say that $f$ is { \bf F-measurable}.  Further, if  $f$ is $F$-measurable and $\eta \circ f \in L_1 ( \Omega)$ for all $\eta \in F$ then we say $f$ is { \bf F-Dunford integrable}.
\end{deff}

\begin{prop}\label{2}
Let $(\Omega , \Sigma , \mu )$ be a measure space.  Let $X$ be a Banach space and $F$ be a norm closed subspace of $X^*$.  If $f: \Omega \rightarrow X$ is $F$-Dunford integrable then for any $E \in \Sigma$, there exists $B_E \in F^*$ such that 
$$B_E ( \eta) = \int_E \eta \circ f (t) d \mu (t), \quad \text{for all } \eta \in F .$$
\end{prop}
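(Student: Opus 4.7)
The plan is to define $B_E : F \to \mathbb{K}$ by the formula required in the conclusion, namely $B_E(\eta) := \int_E \eta \circ f(t)\, d\mu(t)$, which is well-defined since $f$ is $F$-Dunford integrable. Linearity in $\eta$ is immediate from linearity of the integral, so the only content is to show $B_E$ is bounded, i.e.\ $B_E \in F^*$. Since $F$ is a norm-closed subspace of the Banach space $X^*$, it is itself a Banach space, which makes the Closed Graph Theorem available.

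The main step is to consider the auxiliary map $T : F \to L_1(\Omega, \Sigma, \mu)$ defined by $T(\eta) = \eta \circ f$, which is well-defined by the $F$-Dunford integrability hypothesis, and show that $T$ has closed graph. Suppose $\eta_n \to \eta$ in $F$ (in the $X^*$-norm) and $T(\eta_n) \to g$ in $L_1(\mu)$. For each fixed $t \in \Omega$ we have the pointwise bound
\[
|\eta_n(f(t)) - \eta(f(t))| \leq \|\eta_n - \eta\|_{X^*} \, \|f(t)\|_X \longrightarrow 0,
\]
so $T(\eta_n) \to \eta \circ f$ pointwise on $\Omega$. On the other hand, from $L_1$-convergence of $T(\eta_n)$ to $g$ we may extract a subsequence converging to $g$ almost everywhere. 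The two limits must agree $\mu$-almost everywhere, so $g = \eta \circ f = T(\eta)$ in $L_1$, establishing the closed graph property.

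By the Closed Graph Theorem, $T$ is bounded: there exists $M > 0$ with $\|\eta \circ f\|_{L_1(\mu)} \leq M \|\eta\|$ for every $\eta \in F$. Then for any $E \in \Sigma$,
\[
|B_E(\eta)| \leq \int_E |\eta \circ f(t)|\, d\mu(t) \leq \|\eta \circ f\|_{L_1(\mu)} \leq M \|\eta\|,
\]
so $B_E$ is a bounded linear functional on $F$, as required.

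The only delicate point is the closed-graph verification; the rest is formal. In particular, one should note where the hypotheses are used: norm-closedness of $F$ makes $F$ a Banach space (needed for the Closed Graph Theorem), completeness of $X$ is not actually essential for this step, but pointwise boundedness $\|f(t)\| < \infty$ (automatic since $f$ is $X$-valued) is what drives the pointwise convergence $\eta_n \circ f \to \eta \circ f$. The extraction of a $\mu$-a.e.\ convergent subsequence from an $L_1$-convergent sequence is standard, and reconciling it with the pointwise limit is the substantive use of measure theory in the argument.
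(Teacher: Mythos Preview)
Your proof is correct and follows essentially the same approach as the paper's: both define an auxiliary linear map from $F$ into $L_1(\mu)$, verify it has closed graph via the pointwise-versus-a.e.\ limit argument, and then invoke the Closed Graph Theorem to obtain boundedness of $B_E$. The only cosmetic difference is that the paper builds in the factor $\chi_E$ (defining $T(\eta)=(\eta\circ f)\chi_E$), whereas you take $T(\eta)=\eta\circ f$ globally and then estimate $\int_E|\eta\circ f|\le\|\eta\circ f\|_{L_1}$; your choice is slightly cleaner since a single application of the Closed Graph Theorem suffices for all $E$ at once.
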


\begin{proof}
Let $E \in \Sigma$ and define $T:F \rightarrow L_1(\Omega)$ by $T(\eta)=(\eta \circ f)\chi_E$ for all $\eta \in F$.  We first show that $T$ is closed so let a sequence $(\eta_n)_{n \in \N} \subseteq F$ such that $\eta_n \rightarrow \eta$ for some $\eta \in F$ (with respect to the norm of $F$), and $T(\eta_n) \rightarrow g$ in $L_1(\Omega)$.  Then, there exists a subsequence $(\eta_{n_k})_{k \in \N}$ such that $T(\eta_{n_k}) \rightarrow g$ almost everywhere, hence $(\eta_{n_k} \circ f)(t)\chi_E(t) \rightarrow g(t)$ for almost every $t \in \Omega$.  On the other hand, since $\eta_n \rightarrow \eta$ in norm we have that $(\eta_n \circ f)(t)\chi_E(t) \rightarrow ( \eta \circ f)(t) \chi_E(t)$ for all $t \in \Omega$.  Thus we must have that $T( \eta)=g$ and so $T$ is closed.  Since $F$ is a Banach space, by the closed graph theorem, $T$ is continuous and so
$$ \int_E | \eta \circ f(t)| d \mu (t) = \| T( \eta) \|_1 \leq \| T \| \| \eta \| .$$
Hence the linear map $B_E:F \rightarrow \C$ defined by 
$$B_E( \eta) = \int_E \eta \circ f(t) d \mu (t)$$ is continuous on $F$ and therefore $B_E \in F^*$.

\end{proof}

In the situation described in Proposition \ref{2} we write
$$B_E= (D)-\int_E f(t) d \mu (t) .$$

We would like a similar result to the above in the case when $F$ is a linear subspace of $X^*$ which is not necessarily closed.  This is achieved by replacing the assumption ``f is $F$-Dunford integrable'' by the stronger assumption ``$\| f(t) \| \leq g(t)$ for $\mu$-almost all $t \in \Omega$ for some $g \in L_1 (\Omega)$'' as in the following:

\begin{prop}\label{2.5}
Let $(\Omega , \Sigma , \mu )$ be a measure space.  Let $X$ be a Banach space and $F$ be a linear subspace of $X^*$.  Further, suppose $f: \Omega \rightarrow X$ is $\Sigma$-measurable.  If there exists $g \in L_1(\Omega )$ such that $\| f(t) \| \leq g(t)$ for $\mu$-almost all $t \in \Omega$ then $f$ is $F$-Dunford integrable and for any $E \in \Sigma$, there exists $B_E \in F^*$ such that
$$B_E( \eta ) = \int_E \eta \circ f(t) d\mu(t)  , \quad \text{for all } \eta \in F .$$
\end{prop}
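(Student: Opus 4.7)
The plan is to observe that the dominating hypothesis $\|f(t)\| \le g(t)$ with $g \in L_1(\Omega)$ makes the argument substantially easier than in Proposition \ref{2}, bypassing the closed graph machinery entirely. In fact the same inequality that forces $F$-Dunford integrability will directly bound the candidate functional $B_E$ on $F$, so no completeness or norm-closedness of $F$ will be invoked anywhere.

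First I would check that $f$ is automatically $F$-Dunford integrable. Since each $\eta \in F \subseteq X^*$ is continuous on $X$ and $f$ is $\Sigma$-measurable, the composition $\eta \circ f : \Omega \to \C$ is $\Sigma$-measurable, so $f$ is $F$-measurable. Moreover the pointwise inequality
$$|\eta \circ f(t)| \le \|\eta\|\,\|f(t)\| \le \|\eta\|\,g(t)$$
holds for $\mu$-almost every $t$, and since $\|\eta\|\,g \in L_1(\Omega)$, we get $\eta \circ f \in L_1(\Omega)$; hence $f$ is $F$-Dunford integrable.

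Next I would construct $B_E$ explicitly. For a fixed $E \in \Sigma$, the map $B_E : F \to \C$ defined by $B_E(\eta) = \int_E \eta \circ f(t)\,d\mu(t)$ is linear by linearity of the integral and of $\eta \mapsto \eta \circ f$. The very same pointwise bound gives
$$|B_E(\eta)| \le \int_E |\eta \circ f(t)|\,d\mu(t) \le \|\eta\| \int_E g(t)\,d\mu(t),$$
so $B_E$ is continuous on $F$ (with $\|B_E\|_{F^*} \le \int_E g\,d\mu$) and therefore $B_E \in F^*$.

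There is not really a serious obstacle here; the only conceptual point worth flagging is why the closed graph argument used in Proposition \ref{2} is unnecessary. In that proposition, boundedness of $\eta \mapsto \int_E \eta \circ f\,d\mu$ was extracted indirectly and required $F$ to be a Banach space. Under the present stronger integrability hypothesis, the dominating function $g$ supplies an explicit operator-norm bound, which is precisely what lets us drop the norm-closedness assumption on $F$.
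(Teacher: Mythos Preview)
Your proof is correct and follows essentially the same route as the paper: verify $\eta\circ f\in L_1(\Omega)$ via the pointwise bound $|\eta\circ f(t)|\le\|\eta\|\,g(t)$, define $B_E$ by the integral, and use the same estimate to bound $|B_E(\eta)|$ by $\|\eta\|\int_E g\,d\mu$. Your added remark explaining why the closed graph argument of Proposition~\ref{2} is bypassed here is accurate and worth keeping.
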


\begin{proof}
Clearly $\eta \circ f \in L_1(\Omega )$ for all $\eta \in F$ so $f$ is $F$-Dunford integrable.  Let $E \in \Sigma$.  Define $B_E:F \rightarrow \C$ by
$$B_E( \eta )= \int_E \eta \circ f(t) d\mu(t) , \quad \text{for all } \eta \in F .$$
Then, for any $\eta \in F$, we have
$$ |B_E( \eta )| \leq \int_E| \eta \circ f(t)| d\mu(t) \leq \| \eta \| \int_E \| f(t) \| d\mu(t) \leq \| \eta \| \int_E g(t) d\mu(t) \leq \| \eta \| \| g \|_1$$
and so $B_E$ is bounded and hence $B_E \in F^*$ which completes the proof.
\end{proof}

\begin{deff}
Let $(\Omega, \Sigma , \mu)$ be a measure space, $X$ a normed space, and $F$ a subspace of $X^*$.  If $f: \Omega \rightarrow X$ is $F$-Dunford integrable and 
$$(D)-\int_E f(t) d \mu (t) \in X$$ for all $E \in \Sigma$ then $f$ is said to be an {\bf F-Pettis integrable function with X-valued integrals} and we write
$$(D)-\int_E f(t) d \mu (t)=(P)-\int_E f(t) d \mu (t).$$
\end{deff} 

 When the measure $\mu$ is understood then we do not have to specify which measure is considered for Dunford and Pettis integrability.  Otherwise, we will specify the measure with respect to which the Dunford or Pettis integrability is concerned. 

At first it may seem redundant that for an $F$-Pettis integrable function $f$ we specify that it has $X$-valued integrals.  This detail is important when we allow $X$ to be non-complete as it can be seen in Theorems \ref{5} and \ref{19} and Corollary \ref{20}.  If $X$ is complete then we do not insist on mentioning the fact that the integrals are in $X$ (since there is no superspace of $X$ to cause confusion about the containment of the integrals).  In this case we simply say ``$f$ is an {\bf $F$-Pettis integrable function}''.

It is worth noting that if a Banach space $X$ has a predual, say $X_*$, and $F=X_*$ then $F$-Dunford integrability automatically implies $F$-Pettis integrability because, by Proposition \ref{2}, the $F$-Dunford integral $B_E \in F^*=X$.  One way to determine if a Banach space $X$ has a predual is the following result.

\begin{prop}
Let $X$ be a Banach space and let $F$ be a linear subspace of $X^*$.  If the $\sigma (F,X)$ closure of every convex subset of $F$ is equal to its norm closure then $F^*=X$.
\end{prop}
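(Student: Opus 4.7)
The plan is to show that every $\varphi\in F^*$ is in fact $\sigma(F,X)$-continuous; once this is established, standard weak duality identifies such a $\varphi$ with the evaluation $\eta\mapsto\eta(x)$ at some $x\in X$, yielding the identification $F^*=X$ via the canonical map $J:X\to F^*$, $J(x)(\eta)=\eta(x)$.

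First, fix $\varphi\in F^*$, which we may assume is nonzero, and consider its kernel $N=\ker\varphi$. Since $\varphi$ is norm-bounded on $F$, the set $N$ is norm-closed, and being a linear subspace it is convex. Applying the hypothesis to the convex set $N$, its $\sigma(F,X)$-closure coincides with its norm closure, which is $N$ itself; hence $N$ is $\sigma(F,X)$-closed.

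Next, I would invoke the standard fact from topological vector space theory that a nonzero linear functional on a TVS is continuous if and only if its kernel is closed. Applied to $(F,\sigma(F,X))$ and $\varphi$, this forces $\varphi$ to be $\sigma(F,X)$-continuous. Finally, because $F\subseteq X^*$, the subspace $X$ (embedded via $J$ into the algebraic dual of $F$) separates points of $F$; the basic duality theorem for weak topologies then identifies the continuous dual of $(F,\sigma(F,X))$ with $J(X)$, so there exists $x\in X$ with $\varphi=J(x)$, as desired.

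I do not anticipate a major obstacle: the crux is simply recognizing that the hypothesis, stated for arbitrary convex sets, applies in particular to $\ker\varphi$, after which the rest is classical topological vector space reasoning. The most subtle point is interpreting "$F^*=X$" as surjectivity of the canonical map $J$; injectivity of $J$ amounts to $F$ separating points of $X$, which is not part of the stated hypothesis but is the natural context (preduals of $X$) in which the proposition is invoked.
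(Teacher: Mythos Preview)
Your proof is correct and considerably shorter than the paper's, but it follows a genuinely different route. The paper argues by contradiction with a net: assuming some $B\in F^*$ is not $\sigma(F,X)$-continuous, it produces a net $(\eta_i)$ with $\eta_i\to\eta$ in $\sigma(F,X)$ but $B(\eta_i)\not\to B(\eta)$, extracts a subnet whose $B$-values land in a closed half-plane $H\subset\C$ bounded away from $B(\eta)$, and then applies the hypothesis to the \emph{convex hull} of that subnet. Since $\eta$ lies in its $\sigma(F,X)$-closure, which by hypothesis equals its norm closure, one obtains a norm-convergent sequence $\zeta_k\to\eta$ of convex combinations; norm continuity of $B$ and convexity/closedness of $H$ then force $B(\eta)\in H$, a contradiction.

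Your approach instead applies the hypothesis directly to the kernel $N=\ker\varphi$, which is already norm-closed and convex, to conclude that $N$ is $\sigma(F,X)$-closed; the standard TVS criterion (a nonzero linear functional is continuous iff its kernel is closed) finishes the argument immediately. This is more economical and transparent. The paper's argument, while longer, is more self-contained in that it does not invoke the kernel criterion, and it illustrates how the hypothesis interacts with convex hulls of nets---but for the purpose of proving this proposition your route is clearly preferable. Your remark about the interpretation of ``$F^*=X$'' (surjectivity of the canonical map $J$) applies equally to the paper's proof, which likewise only establishes $\sigma(F,X)$-continuity of every $B\in F^*$.
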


\begin{proof}
Let $B \in F^*$.  We want to show that $B$ is $\sigma (F,X)$ continuous.  Suppose this is not the case.  Then there exists a net $(\eta_i)_{i \in I} \subseteq F$ and $\eta \in F$ such that $\eta_i \rightarrow \eta$ in the $\sigma (F,X)$ topology but $B( \eta_i)$ does not converge to $B( \eta)$.  Thus there exists $\epsilon > 0$ so that for every $\alpha \in I$ there exists $\beta \geq \alpha$ such that $B( \eta_{\beta}) \notin O$ where 
$$O= \left( \Re{(B(\eta))} - \epsilon, \Re{(B(\eta))} + \epsilon \right) \times \left( \Im{(B(\eta))} - \epsilon, \Im{(B(\eta))} + \epsilon \right) .$$
So we can define a subnet $(B(\eta_{j}))_{j \in J}$ such that $B( \eta_j) \notin O$ for all $j \in J$.  Since,
$$
O^c  = \{ z : \Re{z} \geq \Re{(B(\eta))} + \epsilon \}
 \cup \{ z : \Re{z} \leq \Re{(B(\eta))} - \epsilon \}
 \cup \{ z : \Im{z} \geq \Im{(B(\eta))} + \epsilon \}
 \cup \{ z : \Im{z} \leq \Im{(B(\eta))} - \epsilon \}
$$
and $\{ j \in J: B( \eta_j) \in O^c \}$ is a final set which can be written as a union of four sets, at least one of these four sets is a final set.  Thus there exists a closed ``half'' plane $H$ of $\C$ such that $J_0= \{ j \in J: B( \eta_j) \in H \}$ is a final set.  Now, $\eta_j \underset{J_0}{\rightarrow} \eta$ in the $\sigma(F,X)$ topology so $\eta \in \overline{co \{\eta_j: j \in J_0 \} }^{\sigma(F,X)}$.  Then, by assumption $\eta \in \overline{co \{\eta_j: j \in J_0 \} }^{\sigma(F,X)}=\overline{co \{\eta_j: j \in J_0 \} }^{\| \cdot \|}$.  Hence, there exists a sequence $(\zeta_k)_{k \geq 1} \subseteq co \{\eta_j: j \in J_0 \}$ such that $\zeta_k \rightarrow \eta$ in norm.  Since $\zeta_k \in co \{\eta_j: j \in J_0 \}$ we have that $\zeta_k= \sum_{j \in J_k} \alpha_{k,j} \eta_j$ for some $J_k \subseteq J_0$ and $\alpha_{k,j} \geq 0$ where $\sum_{j \in J_k} \alpha_{k,j} = 1$.  Then 
$$B( \zeta_k)= \sum_{j \in J_k} \alpha_{k,j}B(\eta_j)$$
and $B( \eta_j) \in H$ for all $j \in J_k$ so $\sum_{j \in J_k} \alpha_{k,j}B(\eta_j) \in H$ as a convex combination of the elements $B(\eta_j)$'s of the convex set $H$.  So $B(\zeta_k) \in H$ for all $k \in \N$ and $B( \zeta_k) \rightarrow B(\eta)$, hence $B(\eta) \in H$ since $H$ is closed which is a contradiction.  So we must have that $B$ is $\sigma (F,X)$ continuous and therefore $F^*=X$.
\end{proof}

\begin{deff} \label{4.5}
Let $X$ be a normed space and let $F$ be a subspace of $X^*$ which separates points in $X$ (thus the $\sigma (X,F)$ topology on $X$ is Hausdorff).  We say the pair $(X,F)$ satisfies the {\bf Krein-Smulian Property} if the $\sigma(X,F)$ closure of the convex hull of any norm-bounded $\sigma(X,F)$ compact subset of $X$ is $\sigma(X,F)$ compact.
\end{deff}
The previous definition is motivated by the classical Krein-Smulian Theorem which states that for any Banach space $X$, the pair $(X,X^*)$ satisfies the Krein-Smulian property.
The next result strengthens Proposition 2.5.18 of \cite{br}.

\begin{thm}\label{5}
Let $X$ be a normed space and let $F$ be a norming linear subspace of $X^*$ such that $(X,F)$ satisfies the Krein-Smulian property.  Suppose $(\Omega , \tau )$ is a topological space, $\Sigma$ is the Borel $\sigma$-algebra generated by $\tau$ and $\mu : \Sigma \rightarrow [0, \infty ]$ is a $\sigma$-compact measure.  Let $\overline{X}$ be the completion of $X$, (hence $\overline{X}$ is a Banach space), and $f: \Omega \rightarrow X \subseteq \overline{X}$ be a function such that $\eta \circ f$ is continuous for all $\eta \in F$.  If there exists $g \in L_1(\Omega, \Sigma , \mu)$,  which is bounded on compact sets, and satisfies $\| f(t) \| \leq g(t)$ for $\mu$-almost all $t \in \Omega$ then $f$ is an $F$-Pettis integrable function with $\overline{X}$-valued integrals.
\end{thm}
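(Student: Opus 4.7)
My plan is to deduce $B_E \in \overline{X}$ from a Riemann-sum approximation of the Dunford integral, using the Krein--Smulian property to keep the approximants in a $\sigma(X,F)$-compact subset of $X$. The norming of $F$ identifies $X$ isometrically inside $F^*$, with $\overline{X}$ the $F^*$-norm-closure. For each $\eta \in F$, $\eta \circ f$ is continuous (hence Borel measurable) and dominated a.e.\ by $\|\eta\|g \in L_1(\mu)$, so $\eta \circ f \in L_1(\mu)$; Proposition~\ref{2.5} therefore produces $B_E \in F^*$ with $B_E(\eta) = \int_E \eta \circ f\, d\mu$ for every $E \in \Sigma$.

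I first handle $E \subseteq K$ with $K$ compact. Fix a Borel $\mu$-null set $N$ containing $\{t : \|f(t)\| > g(t)\}$, let $M_K := \sup_K g$, and set
$$C \;:=\; \overline{f(K\setminus N) \cup \{0\}}^{\,\sigma(X,F)} \;\subseteq\; X.$$
Continuity of $f\colon K \to (X,\sigma(X,F))$ makes $f(K) \cup \{0\}$ $\sigma(X,F)$-compact, so the $\sigma(X,F)$-closed subset $C$ is itself $\sigma(X,F)$-compact; passing $|\eta(x)| \leq \|\eta\|M_K$ through $\sigma(X,F)$-limits, together with the norming of $F$, forces $C$ into an $X$-norm ball. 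Krein--Smulian then upgrades $D := \overline{\mathrm{co}\, C}^{\,\sigma(X,F)}$ to a $\sigma(X,F)$-compact subset of $X$. Using the continuity of finitely many $\eta_j \circ f$ on $K$, I build a Borel partition $\{A_i\}$ of $K$ with tags $s_i \in A_i \setminus N$ (dropping any $A_i \subseteq N$, which carry zero mass) so that the Riemann-type sums $\sum_i \mu(A_i \cap E) f(s_i) \in \mu(K)\,\mathrm{co}\, C \subseteq \mu(K) D$ approximate $B_E$ in $\sigma(F^*,F)$. Since $\mu(K) D$ is $\sigma(X,F)$-compact and $\sigma(F^*,F)\rvert_X = \sigma(X,F)$, it is $\sigma(F^*,F)$-closed in $F^*$, whence $B_E \in \mu(K) D \subseteq X$.

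For arbitrary $E \in \Sigma$, $\sigma$-compactness supplies compact $K_n \uparrow \Omega$, and dominated convergence gives
$$\|B_E - B_{E \cap K_n}\|_{F^*} \;\leq\; \int_{\Omega \setminus K_n} g\, d\mu \;\longrightarrow\; 0;$$
each $B_{E \cap K_n} \in X$ by the previous step, so $B_E$ is the $F^*$-norm limit of elements of $X$ and therefore lies in $\overline{X}$. The main obstacle is the tension between norm-boundedness (demanded by Krein--Smulian) and $\sigma(X,F)$-compactness (demanded by the Riemann-sum limit): the natural candidate $f(K)$ has the latter but can fail the former on $K \cap N$, while $f(K \setminus N)$ has the opposite defect. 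The thickened set $C$, together with the discipline of choosing every tag outside $N$, is what reconciles them.
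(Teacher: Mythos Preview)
Your argument is correct and shares the paper's overall scaffolding---establish $F$-Dunford integrability via Proposition~\ref{2.5}, settle the compactly supported case first, then pass to arbitrary $E$ by $\sigma$-compactness and a Cauchy estimate in the norm of $F^*$---but your treatment of the compact case is genuinely different.  The paper avoids Riemann sums entirely: once it has a convex, circled, $\sigma(X,F)$-compact set $K_0$ containing $f(R)$, the inequality $|B_E(\eta)|\le \mu(R)\sup_{x\in K_0}|\eta(x)|$ shows that $B_E$ is continuous for the Mackey topology $\tau(F,X)$, and the Mackey--Arens theorem then places $B_E$ in $X$ in one stroke.  Your route instead manufactures finite convex combinations $\sum_i \mu(A_i\cap E)\,f(s_i)\in \mu(K)\,D$ that converge to $B_E$ in $\sigma(F^*,F)$, and uses the $\sigma(F^*,F)$-compactness (hence closedness) of $\mu(K)\,D$ to trap $B_E$ inside $X$.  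The Mackey--Arens argument is shorter and needs no partition machinery; yours is more elementary in that it bypasses locally convex duality theory beyond the bare topology $\sigma(X,F)$, and it also yields the sharper containment $B_E\in\mu(K)\,D$.

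One point where your version is actually more careful than the paper's: the Krein--Smulian property (Definition~\ref{4.5}) applies only to \emph{norm-bounded} $\sigma(X,F)$-compact sets, and since $F$ need not be complete, $\sigma(X,F)$-compactness of $f(R)$ alone does not guarantee norm-boundedness.  The paper applies Krein--Smulian to (the circled hull of) $f(R)$ without checking this.  Your excision of the null set $N$ and passage to $C=\overline{f(K\setminus N)\cup\{0\}}^{\,\sigma(X,F)}$, together with the hypothesis that $g$ is bounded on compact sets, is precisely what secures the missing norm bound; the same device would patch the paper's invocation of Krein--Smulian as well.
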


\begin{proof}
Let $\eta \in F$.  Since $\eta \circ f$ is continuous, we have that $\eta \circ f$ is $F$-measurable.  Further, 
$$ \int_{\Omega} |\eta \circ f(t)| d \mu (t) \leq \| \eta \| \int_{\Omega} g(t) d \mu (t) < \infty$$
so $f$ is $F$-Dunford integrable.  Therefore, by Proposition \ref{2.5}, for all $E \in \Sigma$ there exists $B_E \in F^*$ such that
$$B_E( \eta ) = \int_E \eta \circ f(t) d \mu (t), \quad \text{for all }  \eta \in F .$$
In order to show $f$ is in fact an $\overline{X}$-valued $F$-Pettis integrable function it suffices to prove $B_E$ is $\sigma (F,\overline{X})$-continuous.  For the proof of this fact we separate two cases.  

In the first case we assume that the measure $\mu$ has compact support $R$.  We need to define the Mackey topology on $F$, which we'll denote by $\tau (F,X)$.  The Mackey topology $\tau (F,X)$ is the weakest topology on $F$ defined by the seminorms
$$ \eta \mapsto \sup_{c \in K} | \eta (c) |$$
for all convex, $\sigma(X,F)$-compact, circled subsets $K$ of $X$.  The Mackey topology $\tau(F,X)$ is stronger than the weak topology $\sigma (F,X)$, and by the Mackey-Arens theorem, (see \cite[pg. 131]{sw}), $\tau(F,X)$ is the strongest topology on $F$ such that the set of continuous functionals on $(F, \tau(F,X))$ is equal to $X$.

Since $\eta \circ f: \Omega \rightarrow \C$ is continuous for every $\eta \in F$ we obtain that if $(U_i)_{i \in I}$ is a cover of $f(R)$ by $\sigma(X,F)$ basic open sets then $(f^{-1}(U_i))_{i \in I}$ is a cover of $R$ by open sets.  Since $R$ is compact we obtain a finite subcover $(f^{-1}(U_i))_{i \in J}$ (for some finite subset $J$ of $I$) of $R$.  Thus $(U_i)_{i \in J}$ is a finite cover of $f(R)$.  Hence $f(R)$ is $\sigma(X,F)$-compact.

It is easy to enlarge the set $f(R)$ and produce a circled $\sigma(X,F)$ compact subset of $X$.  Indeed let $S$ be the unit circle in $\C$ and let $\psi :S \times X \rightarrow X$ defined by $\psi( \gamma , x )= \gamma x$.  Obviously $\psi$ is continuous when $S$ is equipped with its relative topology of the complex numbers and $X$ is equipped with $\sigma (X,F)$.  Thus $\psi (S \times f(R))$ is a circled $\sigma (X,F)$-compact subset of $X$ which contains $f(R)$.

The set $\psi (S \times f(R))$ may not be convex but we can enlarge it to produce a convex, circled, and $\sigma(X,F)$-compact subset of $X$.  Indeed, since $(X,F)$ has the Krein-Smulian property, we have that the convex hull $K_0$ of $\psi(S \times F(R))$ is $\sigma (X,F)$-compact.  Obviously $K_0$ is also circled and convex.

For $\eta \in F$,
$$
|B_E( \eta)|  = \left| \int_E \eta \circ f(t) d \mu (t) \right|
 \leq \int_R | \eta \circ f(t) | d  \mu  (t)
 \leq  \mu  (R) \sup_{t \in R} | \eta \circ f(t) |
 \leq  \mu  (R) \sup_{x \in K_0} | \eta (x) | .
$$
Hence, $B_E$ is $\tau(F,X)$-continuous and thus, by the Mackey-Arens Theorem, $B_E$ is $\sigma (F,X)$-continuous.  Therefore $B_E \in X \subseteq \overline{X}$.  

In the second case suppose that $\mu$ does not have compact support.  Since $\mu$ is $\sigma$-compact, there exists an increasing sequence of compact sets $R_n$ such that $R_n \nearrow \Omega$.  Then, for each $n \in \N$, there exists $B_n \in X$ such that
$$\eta(B_n) = \int_{E \cap R_n} \eta \circ f(t) d \mu (t), \quad \text{for all }  \eta \in F .$$
For $\eta \in F$ we have that
\begin{align*}
\vert \eta (B_n) - B_E( \eta) \vert & = \left| \int_{E \cap R_n} \eta \circ f(t) d \mu (t) - \int_E \eta \circ f(t) d \mu (t) \right| \leq \int_{E\backslash R_n} | \eta \circ f(t)|  \mu  (t)
\\ & \leq \| \eta \| \int_{\Omega \backslash R_n} \| f(t) \| d  \mu  (t) \leq \| \eta \| \int_{\Omega \backslash R_n} g(t) d  \mu  (t) . \stepcounter{equation}\tag{\theequation}\label{eq1}
\end{align*}
Thus, for $n,m \in \N$ with $m<n$,
$$ |\eta(B_n) - \eta(B_m)| \leq 2 \| \eta \| \int_{\Omega \backslash R_m} g(t) d  \mu  (t) .$$
Hence, since $F$ norms $X$,
$$ \| B_n - B_m \| =\sup_{\substack{ \eta \in F \\ \| \eta \| \leq 1}} |\eta(B_n - B_m) | \leq 2 \int_{\Omega \backslash R_m} g(t) d  \mu  (t) \rightarrow 0$$
as $m \rightarrow \infty$ since $g \in L_1 ( \Omega)$.  
Thus, $(B_n)_{n \in \N}$ is Cauchy in $X$ and therefore there exists $B \in \overline{X}$ such that $B_n \rightarrow B$, hence $\eta (B_n) \rightarrow \eta (B)$ for all $\eta \in F$ (obviously $\eta$ extends continuously to $B \in \overline{X}$).
Note also that by \eqref{eq1} and the fact that $g \in L_1( \Omega)$, 
$$ \eta(B_n) \rightarrow B_E( \eta), \quad \text{ for all } \eta \in F.$$
Thus, $B_E( \eta) = \eta (B)$ for all $\eta \in F$ and therefore $B_E \in \overline{X}$.
\end{proof}

In \cite{gs} equivalent conditions are given for the Krein-Smulian property for real Banach spaces.  There are many papers in the literature discussing this property, for example, in \cite{cmv}, it is proven that if $X$ is a real Banach space which does not contain a copy of $\ell_1[0,1]$ and $F$ is a norming subset of the unit ball of $X^*$, then $(X,F)$ has the Krein-Smulian property.

Next, we look at another sufficient condition for a Banach space valued function to be $F$-Pettis integrable.  We begin with the definition of a Mazur space which was introduced in \cite{wilansky}.

\begin{deff}\label{6}
A {\bf Mazur space} is a locally convex topological vector space $(X, \tau)$, where $\tau$ is the topology on the set $X$, such that every sequentially continuous linear functional $f:(X,\tau) \rightarrow \C$ is continuous.
\end{deff}

There are several papers in the literature that study implications of the fact that the dual $X^*$ of a Banach space $X$ is a Mazur space with respect to the weak$^*$ topology to the Pettis integrability of $X$-valued functions.  See for example \cite{edgar}, \cite{huff}, and \cite{stefannson}.  In the next result rather than considering the weak$^*$ topology on $X^*$ we consider the $\sigma(F,X)$ topology on a linear subspace $F$ of $X^*$.

\begin{thm}\label{8}
Let $X$ be a Banach space and let $F$ be a linear subspace of $X^*$ which separates points in $X$ such that $(F, \sigma (F,X))$ is a Mazur space.  Let $(\Omega , \Sigma, \mu)$ be a measure space. If $f:\Omega \rightarrow X$ is $F$-measurable and $\|f(t) \| \leq g(t)$ for $\mu$-almost all $t \in \Omega$ for some $g \in L_1(\Omega)$ then $f$ is an $F$-Pettis integrable function.
\end{thm}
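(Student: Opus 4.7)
The goal is to show that for each $E \in \Sigma$, the Dunford integral $B_E \in F^*$ produced by Proposition \ref{2.5} actually lies in $X$, i.e.\ that $B_E$ is $\sigma(F,X)$-continuous. The plan is to exploit the Mazur property by reducing the problem to \emph{sequential} $\sigma(F,X)$-continuity and then invoking the dominated convergence theorem.

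First, since $\|f(t)\| \leq g(t)$ with $g \in L_1(\Omega)$ and $f$ is $F$-measurable, Proposition \ref{2.5} applies and yields for each $E \in \Sigma$ a functional $B_E \in F^*$ given by
\[
B_E(\eta) = \int_E \eta \circ f(t)\, d\mu(t), \qquad \eta \in F.
\]
By the definition of a Mazur space (Definition \ref{6}), to show $B_E$ is $\sigma(F,X)$-continuous it suffices to show it is sequentially $\sigma(F,X)$-continuous.

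So let $(\eta_n)_{n \in \N} \subseteq F$ with $\eta_n \to \eta$ in $\sigma(F,X)$, where $\eta \in F$. By definition of $\sigma(F,X)$, this means $\eta_n(x) \to \eta(x)$ for every $x \in X$. In particular, for each fixed $x \in X$ the sequence $(\eta_n(x))$ is bounded in $\C$. Since $X$ is a Banach space and each $\eta_n \in F \subseteq X^*$, the Banach--Steinhaus theorem gives $M := \sup_n \|\eta_n\|_{X^*} < \infty$. Consequently, for $\mu$-almost every $t \in \Omega$,
\[
|\eta_n \circ f(t)| \leq \|\eta_n\|\, \|f(t)\| \leq M\, g(t),
\]
and $Mg \in L_1(\Omega)$. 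Moreover $\eta_n(f(t)) \to \eta(f(t))$ pointwise. The dominated convergence theorem then yields
\[
B_E(\eta_n) = \int_E \eta_n \circ f(t)\, d\mu(t) \longrightarrow \int_E \eta \circ f(t)\, d\mu(t) = B_E(\eta),
\]
establishing sequential $\sigma(F,X)$-continuity of $B_E$. By the Mazur property, $B_E$ is $\sigma(F,X)$-continuous, and since $F$ separates points of $X$, the standard identification of the dual of $(F,\sigma(F,X))$ gives $B_E \in X$. As $E \in \Sigma$ was arbitrary, $f$ is $F$-Pettis integrable.

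The only nontrivial input is the uniform bound on $\|\eta_n\|$; this is exactly where completeness of $X$ enters (via Banach--Steinhaus), and without it one could not apply dominated convergence to conclude sequential continuity of $B_E$. The Mazur hypothesis then upgrades sequential continuity to full continuity, which is the step that would otherwise fail in the weak topology setting.
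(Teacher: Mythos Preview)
Your proof is correct and follows essentially the same route as the paper: invoke Proposition~\ref{2.5} to obtain $B_E \in F^*$, use the Uniform Boundedness Principle to get $\sup_n \|\eta_n\| < \infty$, apply the Dominated Convergence Theorem to establish sequential $\sigma(F,X)$-continuity of $B_E$, and then use the Mazur hypothesis together with the fact that $F$ separates points to conclude $B_E \in X$. The paper's argument is line-for-line the same.
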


\begin{proof}
Let $E \in \Sigma$.  Since $f$ is $F$-measurable and $\|f(t) \| \leq g(t)$ for $\mu$-almost all $t \in \Omega$ and $g \in L_1(\Omega)$, by Proposition \ref{2.5} there exists $B_E \in F^*$ such that 
$$B_E( \eta) = \int_E \eta \circ f(t) d\mu (t) \quad \text{for all }\eta \in F.$$  Next we verify that $B_E$ is $\sigma(F,X)$ sequentially continuous on $F$.  Let $(\eta_n)_{n \in \N}$ be a sequence in $F$ such that $\eta_n \rightarrow \eta$ in the $\sigma(F,X)$ topology, for some $\eta \in F$.  By the Uniform Boundedness Principle we have that $M= \sup_{n \in \N} \| \eta_n \| < \infty$. Further, $\eta_n (x) \rightarrow \eta(x)$ for all $x \in X$, hence, $\eta_n \circ f(t) \rightarrow \eta \circ f(t)$ for all $t \in \Omega$.  Then we have that $| \eta_n \circ f(t)| \leq Mg(t)$ for $\mu$-almost all $t \in \Omega$ and $g \in L_1(\Omega)$ so, by the Dominated Convergence Theorem, we have that 
$$\int_E\eta_n \circ f(t) d\mu (t) \rightarrow \int_E \eta \circ f(t) d \mu (t)$$
i.e. $B_E( \eta_n) \rightarrow B_E (\eta)$.  Hence, $B_E$ is $\sigma (F,X)$ sequentially continuous on $F$ and since $(F, \sigma(F,X))$ is a Mazur space we have, by \cite[Theorem 1.3.1]{kr} that $B_E \in X$.  Therefore, $f$ is an $F$-Pettis integrable function. 
\end{proof}

Next, we give a result which yields an immediate application for Theorem \ref{8}.

\begin{prop}\label{7}
If $X$ is a separable Banach space and $F$ is a linear subspace of $X^*$ then $(F, \sigma(F,X))$ is a Mazur space.
\end{prop}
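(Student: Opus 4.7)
The plan is to prove $B$ is $\sigma(F,X)$-continuous by showing that its kernel $K = \ker B$ is $\sigma(F,X)$-closed in $F$; for a linear functional on a topological vector space, continuity is equivalent to closedness of the kernel (the $B\equiv 0$ case being trivial). The task therefore reduces to showing that whenever $\eta \in F$ lies in the $\sigma(F,X)$-closure of $K$, one has $\eta \in K$.

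Since $\sigma(F,X)$ is the restriction of the weak-$*$ topology $\sigma(X^*,X)$ from $X^*$ to $F$, such an $\eta$ sits in $F \cap \overline{K}^{\sigma(X^*,X)}$. The idea is to exhibit a norm-bounded sequence in $K$ converging to $\eta$ in $\sigma(F,X)$ and then to invoke sequential continuity of $B$. Because $K$ is a linear subspace of $X^*$, hence convex, the Krein-Smulian theorem applied to $K$ and the Banach space $X$ tells us that $\eta$ lies in the $\sigma(X^*,X)$-closure of $K \cap \{\xi \in X^*: \|\xi\| \leq M\}$ for some $M$ only slightly larger than $\|\eta\|$; one may take, for instance, $M = \|\eta\|+1$.

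Separability of $X$ now enters decisively: on every norm-bounded subset of $X^*$ the weak-$*$ topology is metrizable, since the countably many seminorms $\xi \mapsto |\xi(x_n)|$ coming from a countable dense sequence $(x_n) \subseteq X$ generate the restricted topology on such sets. Thus $\eta$ is in fact the $\sigma(X^*,X)$-limit of a sequence $(\eta_n) \subseteq K$ with $\|\eta_n\| \leq M$. Since every $\eta_n$ lies in $K \subseteq F$ and the restriction of $\sigma(X^*,X)$ to $F$ is $\sigma(F,X)$, this sequence converges to $\eta$ in $\sigma(F,X)$. Sequential continuity of $B$ then gives $B(\eta) = \lim_n B(\eta_n) = 0$, so $\eta \in K$ and $K$ is $\sigma(F,X)$-closed in $F$.

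The main technical point I expect to be the crux is the invocation of Krein-Smulian in the right form: what is really needed is that a point $\eta$ in the weak-$*$ closure of a convex subset $C$ of $X^*$ is in fact in the weak-$*$ closure of $C \cap \{\xi: \|\xi\| \leq \|\eta\| + \varepsilon\}$ for every $\varepsilon > 0$. Once this bounded-approximation statement is in hand, separability upgrades it to approximation by a norm-bounded \emph{sequence} through metrizability of the weak-$*$ topology on bounded subsets of $X^*$, and sequential continuity of $B$ then closes the argument. A preliminary norm-continuity step for $B$ (via the usual scaling trick $\eta_n/\sqrt{|B(\eta_n)|}\to 0$) is available if one wishes to place $B \in F^*$ at the outset, but strictly speaking it is not needed for the kernel argument above.
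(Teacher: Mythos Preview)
Your argument has a genuine gap at the Krein--Smulian step. The claim that $\eta\in\overline{K}^{\sigma(X^*,X)}$ forces $\eta\in\overline{K\cap MB_{X^*}}^{\sigma(X^*,X)}$ for $M=\|\eta\|+1$ is \emph{not} what Krein--Smulian says. The theorem asserts that a convex set $C\subseteq X^*$ is weak-$*$ closed iff $C\cap rB_{X^*}$ is weak-$*$ closed for every $r>0$; it does not furnish a bounded approximation for individual points of $\overline{C}^{\,\sigma(X^*,X)}$. In fact your claim can fail even for linear subspaces: if $K\subseteq X^*$ is total (weak-$*$ dense) but non-norming---and such closed subspaces exist in $X^*$ whenever $X$ is separable and not quasi-reflexive, by a theorem of Davis and Lindenstrauss---then the polar $(K\cap B_{X^*})^{\circ}\subseteq X$ is norm-unbounded, so by uniform boundedness there is $\eta\in X^*=\overline{K}^{\,\sigma(X^*,X)}$ unbounded on that polar, and such $\eta$ lies in no $\overline{K\cap MB_{X^*}}^{\,\sigma(X^*,X)}$ whatsoever.

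The paper argues differently: it invokes the Krein--Smulian \emph{corollary} (via a Kadison--Ringrose exercise) that a linear functional is $\sigma(F,X)$-continuous as soon as its restriction to the unit ball $Ba(F)$ is, and then uses separability of $X$ only to make $Ba(F)$ metrizable in $\sigma(F,X)$, so that sequential continuity on the ball upgrades to continuity there. This is the standard reduction. Your kernel approach would be repaired by replacing the bounded-approximation claim with ``it suffices that $K\cap rBa(F)$ be relatively $\sigma(F,X)$-closed in $rBa(F)$ for every $r$,'' after which metrizability of $rBa(F)$ and sequential closedness of $K$ finish the job; so repaired, it is essentially the paper's argument recast at the level of the kernel rather than the functional. (One should note that both routes ultimately appeal to a Krein--Smulian-type statement for the pair $(F,\sigma(F,X))$ rather than for the full dual; the classical theorem delivers this directly only when $F$ is itself weak-$*$ closed in $X^*$.)
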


\begin{proof}
Suppose $\phi$ is a $\sigma (F,X)$ sequentially continuous linear functional on $F$.  We want to show that $\phi$ is $\sigma (F,X)$ continuous.   Since $X$ is a Banach space, by \cite[Exercise 1.9.14]{kr}, it is enough to show $\phi$ restricted to the unit ball $Ba(F)$ of $F$ is $\sigma(F,X)$ continuous.  By \cite[page 426]{ds}, $Ba(F)$ with the $\sigma (F,X)$ topology is metrizable so sequential $\sigma(F,X)$ continuity and $\sigma (F,X)$ continuity are equivalent on $Ba(F)$.  Thus $\phi$ restricted to $Ba(F)$ is $\sigma(F,X)$ continuous.
\end{proof}

\begin{cor}\label{9}
Let $X$ be a separable Banach space and let $F$ be a norm-closed subspace of $X^*$. If $(\Omega, \Sigma, \mu)$ is a measure space,  $f:\Omega \rightarrow X$ is $F$-measurable, and $\|f(t) \| \leq g(t)$ for $\mu$-almost all $t \in \Omega$ for some $g \in L_1(\Omega)$ then $f$ is an $F$-Pettis integrable function.
\end{cor}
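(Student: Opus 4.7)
The plan is to obtain Corollary \ref{9} as an immediate consequence of Theorem \ref{8} and Proposition \ref{7}: the separability of $X$ supplies the Mazur-space hypothesis needed to invoke Theorem \ref{8}, and everything else is already assumed.

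Concretely, the first step is to apply Proposition \ref{7}. Since $X$ is separable, for any linear subspace $F$ of $X^*$ the pair $(F, \sigma(F,X))$ is a Mazur space. The norm-closedness of $F$ is not strictly required at this step, but it fits naturally with the proof of Proposition \ref{7}, where one restricts a $\sigma(F,X)$-sequentially continuous functional to the unit ball $Ba(F)$, which is $\sigma(F,X)$-metrizable precisely because $X$ has a countable dense subset. Once the Mazur property is secured, every hypothesis of Theorem \ref{8} is in place: $X$ is a Banach space, $F$ is a linear subspace of $X^*$ (we may tacitly assume $F$ separates points, which is the natural setting for $F$-Pettis integrability; otherwise one passes to the quotient of $X$ by $\bigcap_{\eta \in F}\ker \eta$), $f$ is $F$-measurable, and $\|f(t)\|\leq g(t)$ for $\mu$-almost every $t$ with $g\in L_1(\Omega)$. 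Applying Theorem \ref{8} directly yields that $f$ is $F$-Pettis integrable, which is the conclusion sought.

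There is no genuine obstacle here; the corollary is a packaging result that isolates the most frequently occurring special case of Theorem \ref{8}. The only delicate point worth flagging in the write-up is that the Mazur space conclusion of Proposition \ref{7} does not depend on $F$ being norm-closed, so the hypothesis on $F$ in the corollary is in fact only needed to ensure that $F$ is itself a Banach space in its own right (which matches the framework of the surrounding results and allows one to speak of $F^*$ without ambiguity).
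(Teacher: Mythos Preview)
Your proposal is correct and follows exactly the paper's own proof, which simply states that the result follows immediately from Theorem~\ref{8} and Proposition~\ref{7}. Your additional remarks about the role of norm-closedness and the tacit separation assumption are accurate side comments but not part of the argument proper.
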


\begin{proof}
Follows immediately from Theorem \ref{8} and Proposition \ref{7}.

\end{proof}

The next result will be used in Section 2 (in the proof of Lemma \ref{14.5}).

\begin{prop}\label{10}
Let $X$ be a normed space, $F$ be a linear subspace of $X^*$ which separates points in $X$ (thus $\sigma (X,F)$ is a Hausdorff topology), $f$ be an $X$-valued $F$-Pettis integrable function, and let $T:X \rightarrow X$ be a $\sigma(X,F)- \sigma (X,F)$ continuous linear operator.  Then $T \circ f$ is an $F$-Pettis integrable function with $X$-valued integrals and
$$T\left( (P)-\int_E f(t) d \mu (t) \right) = (P)-\int_E T(f(t))d \mu (t), \quad \text{for all } E \in \Sigma.$$
\end{prop}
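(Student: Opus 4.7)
The plan is to reduce the Pettis integral of $T \circ f$ to the Pettis integral of $f$ by pulling $T$ through the scalar functionals. The key observation is that the $\sigma(X,F)$-$\sigma(X,F)$ continuity of $T$ implies $\eta \circ T \in F$ for every $\eta \in F$. Indeed, since $F$ separates points, a standard duality fact tells us that the continuous dual of $(X, \sigma(X,F))$ is exactly $F$; each $\eta \in F$ is $\sigma(X,F)$-continuous on $X$, and composing with the $\sigma(X,F)$-$\sigma(X,F)$-continuous map $T$ produces a $\sigma(X,F)$-continuous linear functional, hence an element of $F$.

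Given this, I would first verify that $T \circ f$ is $F$-Dunford integrable. For any $\eta \in F$,
$$\eta \circ (T \circ f) = (\eta \circ T) \circ f,$$
and $\eta \circ T \in F$ by the observation above, so $(\eta \circ T) \circ f$ is $\Sigma$-measurable and in $L_1(\Omega)$ because $f$ is $F$-Dunford integrable. Thus for each $E \in \Sigma$ the linear functional $B_E : F \to \mathbb{C}$ defined by
$$B_E(\eta) = \int_E \eta \circ (T \circ f)(t)\, d\mu(t) = \int_E (\eta \circ T) \circ f(t)\, d\mu(t)$$
is well defined.

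Next, since $f$ is $F$-Pettis integrable with integrals in $X$, for each $E \in \Sigma$ there exists $x_E := (P)\text{-}\int_E f(t)\,d\mu(t) \in X$ satisfying $\zeta(x_E) = \int_E \zeta \circ f(t)\, d\mu(t)$ for all $\zeta \in F$. Applying this with $\zeta = \eta \circ T \in F$ gives
$$B_E(\eta) = (\eta \circ T)(x_E) = \eta(T(x_E)).$$
Since $T(x_E) \in X$ and this identity holds for every $\eta \in F$, the Dunford integral $B_E$ is represented by the element $T(x_E) \in X$ (uniqueness follows from $F$ separating points). Therefore $T \circ f$ is $F$-Pettis integrable with $X$-valued integrals, and
$$(P)\text{-}\int_E T(f(t))\, d\mu(t) = T(x_E) = T\!\left((P)\text{-}\int_E f(t)\, d\mu(t)\right),$$
as desired.

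The only nontrivial ingredient is the identification $(X,\sigma(X,F))^* = F$, and once that is noted the rest is a formal manipulation of the defining integrals; I do not anticipate any genuine obstacle.
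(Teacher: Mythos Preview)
Your proof is correct and is essentially the same as the paper's: both hinge on the observation that $\eta\circ T\in F$ for every $\eta\in F$ (from the $\sigma(X,F)$--$\sigma(X,F)$ continuity of $T$), and then apply the Pettis integral identity for $f$ with the functional $\eta\circ T$ to obtain $\eta(T(x_E))=\int_E \eta(T(f(t)))\,d\mu(t)$. Your write-up is slightly more explicit about the Dunford integrability step and the duality fact $(X,\sigma(X,F))^*=F$, but the argument is otherwise identical.
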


\begin{proof}
Let $E \in \Sigma$.  Since $f$ is an $X$-valued $F$-Pettis integrable function, there exists $B_E \in X$ such that 
$$\eta (B_E) = \int_E \eta (f(t)) d \mu (t), \quad \text{for all } \eta \in F.$$  Since $F$ is a linear subspace of $X^*$ and $T$ is $\sigma (X,F)- \sigma (X,F)$ continuous we have that $\eta \circ T \in F$ for all $\eta \in F$.  Thus  
$$(\eta \circ T)(B_E) = \int_E (\eta \circ T)(f(t)) d\mu (t)$$ i.e.,
$$\eta( T(B_E)) = \int_E \eta ( T(f(t))) d\mu (t),\quad \text{for all }\eta \in F.$$  Hence $T \circ f$ is $F$-Pettis integrable and 
$$T(B_E) = (P)- \int_E T \circ f(t) d\mu (t)$$
i.e.,
$$T\left( (P)-\int_E f(t) d \mu (t) \right) = (P)-\int_E T(f(t))d \mu (t) .$$
\end{proof}

\section{Applications to $\sigma(X,F)$-continuous Semigroups}\label{sectionsemigroup}
In this section we introduce the notion of $\sigma(X,F)$-semigroups and we give sufficient conditions which imply that the generator of a $\sigma(X,F)$-semigroup acting on a Banach space $X$ is $\sigma (X,F) -\sigma(X,F)$ closed.  The main results of the section are Theorems \ref{15} and \ref{19} and Corollary \ref{20}.

\begin{deff}\label{11}
A family $(T_t)_{t \geq 0}$ of bounded linear operators on a normed space $X$ is called a semigroup if it satisfies the following properties:
\\(i)  $T_0=1$, and
\\(ii) $T_{t+s}=T_tT_s$ for all $s,t \geq 0$.

\noindent Let $F$ be a subset of $X^*$ which separates points in $X$ (thus the $\sigma(X,F)$ topology on $X$ is Hausdorff).  We say that a semigroup $(T_t)_{t \geq 0}$ on $X$ is a {\bf $\sigma(X,F)$-semigroup} if
\\(iii) $T_t$ is $\sigma(X,F)- \sigma (X,F)$ continuous for all $t \geq 0$.

\noindent We say a semigroup $(T_t)_{t \geq 0}$ is {\bf $\sigma(X,F)$ continous at zero} if for all $\eta \in F$ and for all $A \in X$ we have that if $t \searrow 0$ then $\eta(T_tA) \rightarrow \eta (A)$.  We say that the semigroup is {\bf $\sigma(X,F)$ continuous} if for all $\eta \in F$ and for all $A \in X$ we have that if $t \rightarrow s$ for some $s \geq 0$ (while $t$ stays non-negative as well) then $\eta(T_tA)\rightarrow \eta(T_sA)$.  A semigroup is said to be {\bf exponentially bounded } if there exists $M \geq 1$ and $\omega \in \R$ such that $\|T_t \| \leq Me^{\omega t}$ for all $t \geq 0$. 
\end{deff}

\begin{rmk}\label{11.5}
Let $X$ be a normed space and $F$ be a linear subspace of $X^*$ which separates points in $X$.  If $(T_t)_{t \geq 0}$ is a $\sigma(X,F)$-semigroup on $X$ then the family of adjoint maps $(T_t^*)_{t \geq 0}$ with $T_t^*:F \rightarrow F$ is defined by $T_t^*( \eta)= \eta \circ T_t$ for all $\eta \in F$.  It is easy to check that $T_t^*$ is $\sigma(F,X)-\sigma(F,X)$ continuous for all $t \geq 0$.  Further, if for any fixed $A \in X$, the map $t \mapsto T_tA$ is $F$-Dunford integrable then for fixed $\eta \in F$, the map $t \mapsto T_t^*(\eta)$ is $X$-Dunford integrable.  Indeed, if the map $t \mapsto T_tA$ is $F$-Dunford integrable for each $A \in X$ then $\eta \circ T_tA \in L_1[0, \infty)$ for all $A \in X$ and $\eta \in F$, that is, $A \circ T_t^* \eta \in L_1[0, \infty)$ for all $A \in X$ and $\eta \in F$ but this is precisely what is needed for  the map $t \mapsto T_t^*(\eta)$ to be $X$-Dunford integrable.
\end{rmk}

\begin{lem}\label{14}
Let $X$ be a Banach space and suppose that $F$ is a linear subspace of $X^*$ which norms $X$. Let $(T_t)_{t \geq 0}$ be a semigroup on $X$ which is $\sigma(X,F)$ continuous at zero.  Then $(T_t)_{t \geq 0}$ is exponentially bounded.
\end{lem}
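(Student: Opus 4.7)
I would follow the classical two-step argument used for exponential boundedness of $C_0$-semigroups, adapted to the weaker continuity hypothesis. The two steps are:

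\textit{Step 1.} Find $\delta > 0$ and $M_0 \geq 1$ such that $\|T_t\| \leq M_0$ for all $t \in [0, \delta]$.

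\textit{Step 2.} Extend the local bound to an exponential bound on $[0, \infty)$ via the semigroup property.

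The reduction in Step 2 is routine. Given Step 1, for $t \geq 0$ write $t = n\delta + r$ with $n \in \N \cup \{0\}$ and $r \in [0, \delta)$; then $T_t = T_\delta^{\,n} T_r$, hence $\|T_t\| \leq M_0^{\,n+1} \leq M_0 \cdot M_0^{\,t/\delta}$. Setting $M := M_0$ and $\omega := (\ln M_0)/\delta$ (or $\omega := 0$ in the degenerate case $M_0 = 1$) gives $\|T_t\| \leq M e^{\omega t}$ for all $t \geq 0$.

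The content is in Step 1, which I would argue by contradiction. If no such $\delta, M_0$ exist, there is a sequence $t_n \searrow 0$ with $\|T_{t_n}\| \to \infty$. The Uniform Boundedness Principle applied to the family $\{T_{t_n}\}$ on the Banach space $X$ produces some $A \in X$ with $\sup_n \|T_{t_n} A\| = \infty$. On the other hand, the $\sigma(X,F)$-continuity at zero gives $\eta(T_{t_n} A) \to \eta(A)$ for every $\eta \in F$, so $\sup_n |\eta(T_{t_n} A)| < \infty$ for each $\eta \in F$. Viewing each $T_{t_n} A$ as a continuous linear functional on $F$ via evaluation $\eta \mapsto \eta(T_{t_n}A)$, the norming hypothesis on $F$ identifies its $F^*$-norm with $\|T_{t_n} A\|_X$. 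A second application of the Uniform Boundedness Principle, now on $F$ (or equivalently on its norm closure in $X^*$, which remains a Banach space and still norms $X$), then upgrades the pointwise boundedness on $F$ to uniform norm-boundedness in $X$, contradicting the choice of $A$. This contradiction furnishes Step 1.

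The main obstacle is exactly this last step: converting $\sigma(X,F)$-pointwise boundedness of a sequence in $X$ into norm-boundedness in $X$. This is the only place where the norming hypothesis on $F$ enters the argument in a substantive way, and it requires a careful identification of the ambient Banach space on which Banach--Steinhaus is invoked. Everything else in the proof is essentially algebraic manipulation of the semigroup relation together with a routine use of UBP on $X$.
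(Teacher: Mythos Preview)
Your argument is correct and follows the same strategy as the paper: use $\sigma(X,F)$-continuity at zero together with the norming hypothesis and the Uniform Boundedness Principle to get a local norm bound, then propagate it to all of $[0,\infty)$ via the semigroup law. The only cosmetic difference is the order of the two UBP applications: the paper first fixes $A$, obtains $\sup_{0\le t\le \delta_A}\|T_tA\|<\infty$ (UBP on $F$), bootstraps this to $[0,1]$ via the semigroup property, and then applies UBP on $X$ to get a uniform operator-norm bound, whereas you go straight for the uniform $\delta$ by taking the contrapositive of UBP on $X$ first and then invoking UBP on $F$---the two arrangements are interchangeable.
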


\begin{proof}
Let $A \in X$.  
\\{\bf Claim}:  There exists $\delta_A > 0$ such that if $C_A= \{ \|T_tA \| : 0 \leq t \leq \delta_A \}$ then $\sup{C_A} < \infty$.
\\  If not, we can find a sequence $(t_n)_{n \in \N} \subset [0, \infty)$ such that $t_n \searrow 0$ but $\| T_{t_n}A \| \rightarrow \infty$.  For $\eta \in F$ we have that $\eta (T_{t_n}A) \rightarrow \eta (A)$ so $\{ | \eta(T_{t_n}A)|: n \in \N \}$ is bounded for all $\eta \in F$ so, by the uniform boundedness principle and the assumption that $F_2$ norms X, $\{ \| T_{t_n}A \|: n \in \N \}$ is bounded contradicting the fact that $\|T_{t_n}A \| \rightarrow \infty$.  So, let $\delta_A>0$ such that $M=\sup{C_A}< \infty$.  

We now claim that for all $A \in X$, $\sup{ \{ \|T_tA \| : 0 \leq t \leq 1 \} } < \infty$.  Indeed, for any $t \in [0,1]$, we have that $t=n \delta_A + \epsilon$ for some integer $n$ such that $0 \leq n \leq \lfloor \delta_A^{-1} \rfloor$ and $0 \leq \epsilon < \delta_A$.  Then
$$ \|T_tA \| = \|T_{\delta_A}^nT_{\epsilon}A \| \leq \|T_{\delta_A} \|^nM \leq K$$
where $K= \max \{M, \|T_{\delta_A} \|^{\lfloor \delta_A^{-1} \rfloor}M \}$ so we have that $\sup \{ \|T_tA \|: t \in [0,1] \} < \infty$. 

By the uniform boundedness principle we obtain that $M=\sup \{ \|T_t \|: t \in [0,1] \} < \infty$.  Let $t \in [0, \infty)$.  Then $t=n+ \epsilon$ for some $n \in \N$ and $0 \leq \epsilon <1$.  Then
$$\|T_t \| = \| T_nT_{\epsilon} \| \leq \| T_1 \|^n \|T_{\epsilon} \| = Me^{\omega n} \leq  Me^{\omega t}$$
where $\omega = \ln{\|T_1 \|}$.  Therefore we have that $(T_t)_{t \geq 0}$ is exponentially bounded.

\end{proof}

\begin{deff}\label{12}
Let $X$ be a normed space, $F$ be a subspace of $X^*$ which separates points in $X$, and $(T_t)_{t \geq 0}$ be a semigroup on $X$ which is $\sigma(X,F)$ continuous at 0.  The {\bf generator} of $(T_t)_{t \geq 0}$ is defined as the linear operator $L$ on $X$, whose domain $D(L)$ consists of those $A \in X$ for which there exists an element $B \in X$ with the property that
$$\eta(B)=\lim_{t \rightarrow 0} \frac{ \eta(T_tA -A)}{t}, \quad \text{for all } \eta \in F.$$  
If $A \in D(L)$ then $L$ is defined by $LA=B$.
\end{deff}

\begin{rmk}\label{remark2}
Let $X$ be a normed space and $F,F_1,F_2$ be subspaces of $X^*$ which separate points in $X$.  Let $(T_t)_{t \geq 0}$ be a semigroup on $X$ which is $\sigma(X,F)$ continuous at 0, let $L$ be its generator, and $D(L)$ be the domain of $L$.  Suppose there exists $\omega \in \R$ such that for every $\lambda \in \C$ with $\Re{\lambda}> \omega$ we have that $\lambda$ belongs to the resolvent of $L$ and moreover the bounded linear operator $(\lambda - L)^{-1}$ is $\sigma(X,F_1)$-$\sigma(X,F_2)$ continuous then $L:D(L) \rightarrow X$ is $\sigma(X,F_2)-\sigma (X,F_1)$ closed.
\end{rmk}

\begin{proof}
In order to show that $L$ is $\sigma(X,F_2)-\sigma(X,F_1)$ closed we fix $\omega \in \R$ as given in the statement and $\lambda \in \C$ with $\Re{\lambda} > \omega$ and we show that $\lambda - L:D(L) \rightarrow X$ is $\sigma(X,F_2)-\sigma(X,F_1)$ closed.  Indeed, suppose $(A_{\gamma})_{\gamma} \subseteq D(L)$ is a net which converges to some element $A \in X$ in the $\sigma(X,F_2)$ topology and $((\lambda - L)(A_{\gamma}))_{\gamma}$ converges in the $\sigma (X,F_1)$ topology to some element $B \in X$.  Since $(\lambda-L)^{-1}$ is $\sigma(X,F_1)-\sigma(X,F_2)$ continuous we have that
$$( \lambda - L)^{-1}(\lambda - L)A_{\gamma} \rightarrow (\lambda - L)^{-1}B$$
 i.e., $A_{\gamma} \rightarrow (\lambda - L)^{-1}B$ in the $\sigma(X,F_2)$ topology, and hence $(\lambda - L)^{-1}B=A$ so $A \in D(L)$ and $(\lambda -L)A=B$.  Therefore $\lambda - L:D(L) \rightarrow X$ is $\sigma(X,F_2)-\sigma(X,F_1)$ closed and so $L:D(L) \rightarrow X$ is $\sigma(X,F_2)-\sigma(X,F_1)$ closed.
\end{proof}

\begin{lem}\label{13}
Let $X$ be a Banach space.  Suppose that $F$ is a norm-closed subspace of $X^*$ which is a norming set for $X$ and assume that $(F, \sigma(F,X))$ and $(X, \sigma(X,F))$ are Mazur spaces.  Further, suppose $(T_t)_{t \geq 0}$ is a $\sigma(X,F)$-semigroup on $X$ which is exponentially bounded and for all $A \in X$, the map $t \mapsto T_tA$ is $F$-measurable with respect to the Borel $\sigma$-algebra.  Then there exists $\omega \in \R$ such that for all $\lambda \in \C$ with $\Re{\lambda}> \omega$ the map $[0, \infty) \ni t \mapsto e^{- \lambda t}T_tA$ is $F$-Pettis integrable (with respect to the Lebesgue measure on $[0, \infty)$) for all $A \in X$ and if $R(\lambda ):X \rightarrow X$ is defined by
\begin{equation}\label{star}
R(\lambda)A=(P)- \int_0^{\infty}e^{- \lambda t}T_tA dt , \quad \text{for all } A \in X
\end{equation}
then $R(\lambda)$ is $\sigma(X,F)-\sigma(X,F)$ continuous.
\end{lem}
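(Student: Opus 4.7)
The plan is to apply Theorem~\ref{8} to produce the Pettis integral, and then use the Mazur space hypothesis on $(X,\sigma(X,F))$ to upgrade the sequential $\sigma(X,F)$-continuity of $A\mapsto\eta(R(\lambda)A)$ to genuine $\sigma(X,F)$-continuity. Since the $\sigma(X,F)$-continuous linear functionals on $X$ are exactly the elements of $F$, this upgrade is precisely what is needed to conclude that $R(\lambda)$ is $\sigma(X,F)$-$\sigma(X,F)$ continuous (via the standard fact that a linear operator $T:X\to X$ is $\sigma(X,F)$-$\sigma(X,F)$ continuous iff $\eta\circ T\in F$ for every $\eta\in F$).

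By exponential boundedness, fix $M\geq 1$ and $\omega\in\R$ with $\|T_t\|\leq Me^{\omega t}$; this is the $\omega$ that appears in the statement. For $\lambda\in\C$ with $\Re\lambda>\omega$ and $A\in X$ fixed, set $f(t)=e^{-\lambda t}T_tA$ and verify the hypotheses of Theorem~\ref{8}: $F$ separates points of $X$ (since $F$ norms $X$); $(F,\sigma(F,X))$ is Mazur (given); $\eta\circ f$ is Borel measurable for each $\eta\in F$, being the product of a continuous function and a measurable function; and $\|f(t)\|\leq M\|A\|e^{(\omega-\Re\lambda)t}$, which lies in $L_1[0,\infty)$. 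Theorem~\ref{8} therefore gives $F$-Pettis integrability and defines $R(\lambda)A\in X$ by \eqref{star}. The same domination combined with the norming property of $F$ yields a bound of the form $\|R(\lambda)\|\leq c^{-1}M/(\Re\lambda-\omega)$, so $R(\lambda)$ is a bounded linear operator on $X$.

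For the continuity claim, fix $\eta\in F$ and suppose $A_n\to A$ in $\sigma(X,F)$; the goal is to show $\eta(R(\lambda)A_n)\to\eta(R(\lambda)A)$. Viewing each $A_n$ as a bounded linear functional on the Banach space $F$, the pointwise convergence $\eta(A_n)\to\eta(A)$ for every $\eta\in F$ together with the uniform boundedness principle gives $\sup_n\|A_n\|_{F^*}<\infty$, and since $F$ norms $X$ this yields $K:=\sup_n\|A_n\|<\infty$. The $\sigma(X,F)$-$\sigma(X,F)$ continuity of each $T_t$ ensures $\eta\circ T_t\in F$, so $\eta(T_tA_n)\to\eta(T_tA)$ pointwise in $t$, while the bound $|e^{-\lambda t}\eta(T_tA_n)|\leq\|\eta\|MK\,e^{(\omega-\Re\lambda)t}$ provides an $L_1$ dominant. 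The Dominated Convergence Theorem gives the desired convergence, so $A\mapsto\eta(R(\lambda)A)$ is sequentially $\sigma(X,F)$-continuous. The Mazur hypothesis on $(X,\sigma(X,F))$ promotes this to $\sigma(X,F)$-continuity, and duality forces $\eta\circ R(\lambda)\in F$.

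The main obstacle is that sequential continuity does not in general imply continuity for non-metrizable topologies; the Mazur hypothesis on $(X,\sigma(X,F))$ is precisely what lets one sidestep this. A secondary subtlety is the uniform boundedness step, which requires the norming property of $F$ to transfer a bound in $F^*$ back to a norm bound in $X$ so that dominated convergence applies with a single integrable majorant.
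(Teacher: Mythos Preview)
Your proof is correct and uses the same ingredients as the paper's argument. The only difference is organizational: the paper obtains $\eta\circ R(\lambda)\in F$ by applying Theorem~\ref{8} a second time, now to the $F$-valued adjoint map $t\mapsto e^{-\lambda t}T_t^*\eta$ (with the roles of $X$ and $F$ swapped, so that the Mazur hypothesis on $(X,\sigma(X,F))$ is invoked through Theorem~\ref{8}), whereas you unpack that second application directly---proving sequential $\sigma(X,F)$-continuity of $A\mapsto\eta(R(\lambda)A)$ via dominated convergence and then invoking the Mazur property yourself. Both routes amount to the same computation; your version makes the use of the Mazur hypothesis on $(X,\sigma(X,F))$ more transparent, while the paper's version highlights the symmetry between the two Pettis integrability statements.
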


\begin{proof}
Let real numbers $M$ and $\omega$ be such that $\| T_t \| \leq Me^{\omega t}$ for all $t \geq 0$.
Fix $\lambda \in \C$ such that $\Re{ \lambda} > \omega$. Since for all $A \in X$, $t \mapsto T_tA$ is $F$-measurable, $(F, \sigma (F,X))$ is a Mazur space, and $ \| e^{-\lambda t}T_tA \| \leq e^{-(\Re{\lambda})t}\| A \| Me^{ \omega t}=Me^{-(\Re{\lambda}-\omega)t} \| A \| \in L_1[0, \infty)$ we obtain, by Theorem \ref{8}, that for all $A \in X$ the function $t \mapsto e^{-(\Re{\lambda})t}T_tA$ is an  $F$-Pettis integrable function (with respect to the Lebesgue measure).  Thus for all $A \in X$, there exists $x_A \in X$ such that
\begin{equation}\label{star2}
\eta(x_A) = \int_0^{\infty} \eta(e^{-\lambda t}T_tA)dt, \quad \text{for all } \eta \in F.
\end{equation}
Define $R(\lambda):X \rightarrow X$ by $R(\lambda)A=x_A$.  In order to show that $R(\lambda)$ is bounded notice that for $A \in X$,
\begin{align*}
\| R(\lambda)A \| & = \|x_A \|= \sup_{\substack{ \eta \in F \\ \| \eta \|=1}} { \left| \eta(x_A) \right| } \quad \text{ since $F$ norms X}
\\ & =  \sup_{\substack{ \eta \in F \\ \| \eta \|=1}} { \left| \int_0^{\infty} \eta (e^{-\lambda t}T_tA) dt \right| } \leq  \sup_{\substack{ \eta \in F \\ \| \eta \|=1}} {  \int_0^{\infty}\left| \eta (T_tA)\right|e^{-(\Re{\lambda}) t} dt } \leq \| A \| \int_0^{\infty} Me^{(\omega - \Re{\lambda})t}dt .
\end{align*}
So $R(\lambda)$ is bounded.  Lastly, in order to show that $R(\lambda)$ is $\sigma (X,F)-\sigma(X,F)$ continuous we need to show that $\eta \circ R(\lambda) \in F$ for all $\eta \in F$.  By \eqref{star2} we have that for $\eta \in F$,
\begin{equation}\label{13.1}
 (\eta \circ R(\lambda))(A) = \int_0^{\infty} \eta (e^{-\lambda t}T_tA) dt = \int_0^{\infty}e^{-\lambda t} T_t^*(\eta)(A) dt.
\end{equation}
 Since $F$ is a Banach space, $(X, \sigma (X,F))$ is a Mazur space, $t \mapsto e^{-\lambda t}T_t^*(\eta)$ is $X$-measurable by Remark~\ref{11.5} and bounded by an $L_1[0, \infty)$ function, we have by Theorem \ref{8} that $t \mapsto e^{-\lambda t}T_t^*\eta$ is an $F$-valued $X$-Pettis integrable function (with respect to the Lebesgue measure on $[0,\infty)$) for all $\eta \in F$.  Thus for all $\eta \in F$ there exists $\psi_{\eta} \in F$ so that 
$$A( \psi_{\eta})= \int_0^{\infty} A( e^{-\lambda t}T_t^*( \eta)) dt, \quad \text{for all } A \in X.$$
Combining with \eqref{13.1} we obtain
$$ \psi_{\eta}(A) = \int_0^{\infty} \eta(e^{-\lambda t}T_tA) dt = (\eta \circ R(\lambda)(A), \quad \text{for all } A \in X$$
and so $ \eta \circ R(\lambda)= \psi_{\eta} \in F$. Thus $R(\lambda)$ is $\sigma(X,F)$-$\sigma(X,F)$ continuous. 
\end{proof}

The conclusion of Lemma \ref{13} states that the Laplace transform of the semigroup is a bounded operator which is continuous with respect to the same topologies as each member of the semigroup family.  This conclusion becomes an assumption of the next lemma.

\begin{lem}\label{14.5}
Let $X$ be a Banach space and let $F$ be a linear subspace of $X^*$ which is norming for $X$.  Assume that $(T_t)_{t \geq 0}$ is a $\sigma(X,F)$-semigroup on $X$ which is $\sigma(X,F)$ continuous at zero such that there exists $\omega \in \R$ such that for all $\lambda \in \C$ with $\Re{\lambda}> \omega$ the map $[0, \infty) \ni t \mapsto e^{- \lambda t}T_tA$ is $F$-Pettis integrable (with respect to the Lebesgue measure on $[0, \infty)$) for all $A \in X$.  Define
$$R(\lambda)A=(P)- \int_0^{\infty}e^{- \lambda t}T_tA dt , \quad \text{for all } A \in X .$$
Then all such $\lambda$ belong to the resolvent of the generator $L$ of the semigroup $(T_t)_{t\geq 0}$ and moreover $R(\lambda)=(\lambda -L)^{-1}$ for all such $\lambda$.
\end{lem}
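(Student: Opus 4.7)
My plan is to show that $\lambda - L \colon D(L) \to X$ is bijective with inverse $R(\lambda)$ by separately verifying (a) $(\lambda - L)R(\lambda) = I$ on $X$ and (b) $R(\lambda)(\lambda - L) = I$ on $D(L)$. The key ingredient throughout is the commutation $T_s R(\lambda)A = R(\lambda) T_s A$, which follows from Proposition~\ref{10} (since $T_s$ is $\sigma(X,F)$-$\sigma(X,F)$ continuous) together with the semigroup identity $T_s T_t = T_t T_s$.

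For (a), Proposition~\ref{10} gives $T_s R(\lambda)A = (P)\text{-}\int_0^\infty e^{-\lambda t} T_{t+s} A\, dt$; applying $\eta \in F$ and changing variables $u = t+s$ in the resulting scalar integral yields
\[
\eta\!\left(\frac{T_s R(\lambda)A - R(\lambda)A}{s}\right) = \frac{e^{\lambda s} - 1}{s}\eta(R(\lambda)A) - \frac{e^{\lambda s}}{s}\int_0^s e^{-\lambda u}\eta(T_u A)\, du.
\]
As $s \to 0^+$, the right-hand side tends to $\lambda \eta(R(\lambda)A) - \eta(A)$, using right-continuity of $u \mapsto e^{-\lambda u}\eta(T_u A)$ at $0$ (which follows from $\sigma(X,F)$-continuity of the semigroup at zero). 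By Definition~\ref{12}, $R(\lambda)A \in D(L)$ and $L R(\lambda)A = \lambda R(\lambda)A - A$, which establishes (a).

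For (b), given $A \in D(L)$, the commutation and linearity of $R(\lambda)$ give
\[
\frac{T_s R(\lambda)A - R(\lambda)A}{s} = R(\lambda)\!\left(\frac{T_s A - A}{s}\right),
\]
whose left-hand side $\sigma(X,F)$-converges to $\lambda R(\lambda)A - A$ as $s \to 0^+$ by (a). Applying $\eta \in F$ to the right-hand side and writing the defining Pettis integral explicitly,
\[
\eta\!\left(R(\lambda)\frac{T_s A - A}{s}\right) = \int_0^\infty e^{-\lambda t}(T_t^*\eta)\!\left(\frac{T_s A - A}{s}\right) dt,
\]
and the integrand tends pointwise to $(T_t^*\eta)(LA) = \eta(T_t LA)$. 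Passing the limit inside the integral and comparing gives $\eta(R(\lambda)LA) = \lambda \eta(R(\lambda)A) - \eta(A)$ for all $\eta \in F$, whence $R(\lambda)LA = \lambda R(\lambda)A - A$ (since $F$ separates points of $X$), and therefore $R(\lambda)(\lambda - L)A = A$.

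The main obstacle is justifying the interchange of limit and integral in the last step. I would handle this by first establishing, for $A \in D(L)$ and $\eta \in F$, the scalar identity $\eta(T_t A) - \eta(A) = \int_0^t \eta(T_u LA)\, du$, which supplies the integrable dominating function $Ce^{(\omega - \Re\lambda) t}$ (via exponential boundedness from Lemma~\ref{14}) required by dominated convergence. This identity in turn may be obtained by considering $\psi(t) := \eta(T_t A) - \eta(A) - \int_0^t \eta(T_u LA)\, du$, which satisfies $\psi(0) = 0$ and has right derivative identically zero on $[0, \infty)$, and then concluding $\psi \equiv 0$; since the hypotheses only immediately yield right-continuity of $t \mapsto \eta(T_t A)$, ruling out discontinuities to obtain this last conclusion is the delicate step, requiring some additional argument using the semigroup structure and the exponential bound.
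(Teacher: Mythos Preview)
Your part (a) is the paper's argument: both use Proposition~\ref{10} for the commutation $T_sR(\lambda)=R(\lambda)T_s$, then perform the same scalar calculation (your $\frac{e^{\lambda s}-1}{s}$ factorization and the paper's splitting $\int_0^\infty(e^{-\lambda(u-t)}-e^{-\lambda u})\,\eta(T_uA)\,du - \int_0^t e^{-\lambda(u-t)}\eta(T_uA)\,du$ are algebraically the same) to conclude $R(\lambda)A\in D(L)$ and $(\lambda-L)R(\lambda)A=A$.

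For (b) the paper is far terser: it writes only ``Since $T_t$ and $R(\lambda)$ commute, we obtain similarly $R(\lambda)(\lambda-L)A=A$ for all $A\in D(L)$.'' You have unpacked this and correctly isolated the one non-trivial point the paper leaves implicit. The commutation gives $R(\lambda)\,s^{-1}(T_sA-A)=s^{-1}(T_s-I)R(\lambda)A$, whose $\sigma(X,F)$-limit is $\lambda R(\lambda)A-A$ by (a); what remains is to identify this same limit as $R(\lambda)LA$, i.e.\ to pass the limit $s\to0^+$ through $R(\lambda)$. The cleanest way to do this---and presumably what ``similarly'' is gesturing at---is to observe that it is immediate once $\eta\circ R(\lambda)\in F$ for each $\eta\in F$, i.e.\ once $R(\lambda)$ is $\sigma(X,F)$--$\sigma(X,F)$ continuous. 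That is not among the stated hypotheses of Lemma~\ref{14.5}, but in the two places the lemma is invoked the missing ingredient is available: before Theorem~\ref{15}, Lemma~\ref{13} has already shown $\eta\circ R(\lambda)\in F$; in Theorem~\ref{19} the semigroup is assumed $\sigma(X,F)$-continuous (not merely at zero), so $t\mapsto\eta(T_tA)$ is continuous and your FTC identity $\eta(T_tA)-\eta(A)=\int_0^t\eta(T_uLA)\,du$ follows from the standard fact that a continuous function with vanishing right derivative is constant, after which your dominated-convergence argument goes through. So your diagnosis of the ``delicate step'' is accurate; the paper's proof does not address it explicitly, and your route is a correct way to fill it in once one has continuity rather than mere right-continuity.
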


\begin{proof}
Let $ \eta \in F$, $A \in X$, and $\lambda \in \C$ with $\Re{\lambda} > \omega$.  Since $R(\lambda)A=(P)- \int_0^{\infty}e^{-\lambda t}T_tAdt$ for all $A \in X$ we have that
$$\eta (R(\lambda)A)= \int_0^{\infty}e^{- \lambda t}\eta (T_tA) dt, \quad \text{for all }A \in X \text{ and } \eta \in F.$$
Since $T_t$ is $\sigma (X, F)$-$\sigma (X, F)$ continuous and $F$ is a linear subspace of $X^*$ which is norming for $X$, by Proposition \ref{10}, we have that $R( \lambda)T_tA=T_tR( \lambda)A$.
Thus,
\begin{align*}
\eta( t^{-1}(T_t-I)R(\lambda)A) & = \eta ( t^{-1}R(\lambda)(T_tA)- t^{-1}R(\lambda)A) \quad \text{since $R( \lambda)T_tA=T_tR( \lambda)A$}
\\ & = \frac{1}{t} \int_0^{\infty} e^{- \lambda s} \eta(T_sT_tA)ds - \frac{1}{t} \int_0^{\infty}e^{-\lambda s} \eta (T_s A) ds
\\ & = \frac{1}{t} \int_t^{\infty} e^{- \lambda (u-t)} \eta (T_uA)du - \frac{1}{t} \int_0^{\infty} e^{- \lambda s} \eta (T_sA)ds
\\ & = \frac{1}{t} \int_0^{\infty} ( e^{- \lambda (u-t)} - e^{- \lambda u}) \eta (T_uA) du - \frac{1}{t} \int_0^t e^{- \lambda (u-t)} \eta (T_uA) du .
\end{align*}
Further,
$$ \frac{1}{t} \int_0^{\infty} ( e^{- \lambda (u-t)} - e^{- \lambda u}) \eta (T_uA) du \rightarrow \lambda \eta(R(\lambda)A)$$
and, since $(T_t)_{t \geq 0}$ is $\sigma(X,F)$ continuous at 0,
$$\frac{1}{t} \int_0^t e^{- \lambda (u-t)} \eta (T_uA) du \rightarrow \eta (A) .$$
So, if $L$ denotes the generator of the semigroup $(T_t)_{t \geq 0}$ and $D(L)$ denotes its domain, then $R(\lambda)A \in \mathcal{D}(L)$ for all $A \in X$ and $\eta(( \lambda -L)R(\lambda)A)=\eta(A)$ for all $A \in X$ and all $\eta \in F$.  Since $F$ is norming for $X$ we obtain that $(\lambda -L)R(\lambda )A=A$ for all $A \in X$.  Since $T_t$ and $R( \lambda)$ commute, we obtain similarly $R(\lambda)(\lambda - L)A=A$ for all $A \in D(L)$.  Thus $\lambda$ belongs to the resolvent of $L$ and $R(\lambda)=(\lambda - L)^{-1}$.
\end{proof}

\begin{thm}\label{15}
Let $X$ be a Banach space and let $F$ be a norm closed subspace of $X^*$ which is a norming set for $X$.  Assume that $(F, \sigma(F,X))$ as well as $(X, \sigma(X,F))$ are Mazur spaces. Let $(T_t)_{t \geq 0}$ be a $\sigma(X,F)$-semigroup which is $\sigma(X,F)$ continuous at zero. Then the generator, $L$, of $(T_t)_{t \geq 0}$ is $\sigma(X,F)-\sigma(X,F)$ closed.
\end{thm}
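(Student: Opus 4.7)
The plan is to assemble the machinery from Lemmas~\ref{14}, \ref{13}, \ref{14.5}, and Remark~\ref{remark2}: I will realize the resolvent of $L$ as the Laplace transform of the semigroup, and then invoke the abstract criterion for closedness. First I apply Lemma~\ref{14}: since $F$ norms $X$ and $(T_t)_{t \geq 0}$ is $\sigma(X,F)$ continuous at zero, that lemma immediately yields $M \geq 1$ and $\omega \in \R$ with $\|T_t\| \leq Me^{\omega t}$ for all $t \geq 0$.

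The next step is to verify the one hypothesis of Lemma~\ref{13} that is not explicitly given in the statement of this theorem, namely that for each $A \in X$ the orbit map $t \mapsto T_tA$ is $F$-measurable with respect to the Borel $\sigma$-algebra on $[0,\infty)$. My plan is to show that, for fixed $\eta \in F$ and $A \in X$, the scalar function $t \mapsto \eta(T_tA)$ is right-continuous on $[0,\infty)$: for any $t_0 \geq 0$ and $t > t_0$ the semigroup law gives
$$\eta(T_tA) - \eta(T_{t_0}A) = \eta\bigl((T_{t-t_0}-I)T_{t_0}A\bigr),$$
and $\sigma(X,F)$ continuity at zero, applied to the \emph{fixed} vector $T_{t_0}A \in X$, forces the right-hand side to tend to zero as $t \searrow t_0$. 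Right-continuous functions are Borel measurable, so this supplies the missing measurability.

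With exponential boundedness and $F$-measurability in hand, Lemma~\ref{13} produces $\omega \in \R$ such that for every $\lambda \in \C$ with $\Real \lambda > \omega$ the Laplace transform
$$R(\lambda)A = (P)-\int_0^{\infty} e^{-\lambda t}T_tA\, dt, \qquad A \in X,$$
is well defined as an $F$-Pettis integral, gives a bounded linear operator on $X$, and is $\sigma(X,F)$-$\sigma(X,F)$ continuous. Lemma~\ref{14.5} then identifies $R(\lambda) = (\lambda - L)^{-1}$, so every such $\lambda$ lies in the resolvent set of $L$ and the resolvent is $\sigma(X,F)$-$\sigma(X,F)$ continuous. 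Applying Remark~\ref{remark2} with $F_1 = F_2 = F$ converts this resolvent continuity into the desired $\sigma(X,F)$-$\sigma(X,F)$ closedness of $L$.

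I expect the only non-routine step to be the measurability verification, since the theorem assumes only continuity at zero while Lemma~\ref{13} nominally wants Borel $F$-measurability of the full orbit map; the point of the right-continuity argument above is precisely to bridge this gap. Everything else is a clean chain of applications of the previously established lemmas.
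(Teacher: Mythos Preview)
Your proposal is correct and follows essentially the same route as the paper's own proof: exponential boundedness from Lemma~\ref{14}, right-continuity of $t\mapsto\eta(T_tA)$ to obtain Borel $F$-measurability, then Lemma~\ref{13} for the Laplace transform $R(\lambda)$ and its $\sigma(X,F)$-$\sigma(X,F)$ continuity, Lemma~\ref{14.5} to identify $R(\lambda)=(\lambda-L)^{-1}$, and finally Remark~\ref{remark2} with $F_1=F_2=F$ to conclude closedness. The only cosmetic difference is that you spell out the application of Remark~\ref{remark2} with the explicit choice $F_1=F_2=F$, which the paper leaves implicit.
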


\begin{proof}
By Lemma \ref{14}, $(T_t)_{t \geq 0}$ is exponentially bounded.

Further, fix $A \in X$ and $\eta \in F$ and notice that if $t \searrow s$ we have
$$ \left| \eta(T_tA)- \eta (T_s(A)) \right| = \left| \eta(T_{t-s}T_sA-T_sA) \right| \rightarrow 0$$
since $(T_t)_{t \geq 0}$ is  $\sigma(X,F)$ continuous at zero.  So $t \mapsto \eta (T_tA)$ is right-continuous and so Borel measurable.  Hence, by Lemma \ref{13}, there exists $\omega \in \R$ such that for all $\lambda \in \C$ with $\Re{\lambda}> \omega$ there exists a bounded linear operator $R(\lambda):X \rightarrow X$ which is $\sigma(X,F)-\sigma(X,F)$ continuous and satisfies Equation \eqref{star}.  Therefore by Lemma \ref{14.5} we obtain that $R(\lambda)=(\lambda - L)^{-1}$ for all such $\lambda$.  Thus all $\lambda \in \C$ with $\Re{\lambda}> \omega$ belong to the resolvent of $L$ and for all such $\lambda$ we have that $(\lambda -L)^{-1}$ is $\sigma(X,F)-\sigma(X,F)$ continuous.  Hence by Remark \ref{remark2} we obtain that $L$ is $\sigma(X,F)-\sigma(X,F)$ closed.

\end{proof}

We also have an analogous result when we replace the property of Mazur with the Krein-Smulian property.  First, we will need a lemma.

\begin{lem}\label{18.5}
Let $X$ be a Banach space and $F$ be a linear (not necessarily closed) subspace of $X^*$ which is norming for $X$.  Assume that $(X,F)$ and $(F,X)$ satisfy the Krein-Smulian property.  Let $(T_t)_{t \geq 0}$ be an $\sigma(X,F)$-semigroup on $X$ which is $\sigma(X,F)$ continuous.  Then there exists $\omega \in \R$ such that for all $\lambda \in \C$ with $\Re{\lambda}> \omega$ the map $[0, \infty) \ni t \mapsto e^{- \lambda t}T_tA$ is $F$-Pettis integrable for all $A \in X$ and if we define
$$R(\lambda)A=(P)- \int_0^{\infty}e^{- \lambda t}T_tA dt , \quad \text{for all } A\in X$$
then $R(\lambda)$ is $\sigma(X, \overline{F})$-$\sigma(X,F)$ continuous for all such $\lambda$ where $\overline{F}$ is the closure of $F$ inside $X^*$.
\end{lem}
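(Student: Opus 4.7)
My plan is to parallel the proof of Lemma \ref{13}, replacing each application of the Mazur-space Theorem \ref{8} with its Krein--Smulian analogue, Theorem \ref{5}. Because $F$ is not assumed norm-closed, the second Pettis integral will land only in the closure $\overline F$ of $F$ in $X^*$, and this is exactly the source of the weaker $\sigma(X,\overline F)$-$\sigma(X,F)$ continuity in the conclusion.

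First I would observe that $\sigma(X,F)$ continuity of $(T_t)_{t\geq 0}$ implies $\sigma(X,F)$ continuity at zero, so by Lemma \ref{14} (which needs only that $F$ is norming) I obtain $M \geq 1$ and $\omega \in \R$ with $\|T_t\| \leq Me^{\omega t}$. Fix $\lambda \in \C$ with $\Re\lambda > \omega$ and $A \in X$, and set $f_A(t) = e^{-\lambda t}T_tA$. For each $\eta \in F$, the scalar function $\eta\circ f_A(t) = e^{-\lambda t}\eta(T_tA)$ is continuous on $[0,\infty)$ by the $\sigma(X,F)$ continuity of the semigroup, and $\|f_A(t)\| \leq M\|A\|e^{-(\Re\lambda - \omega)t}$ is in $L_1[0,\infty)$ and bounded on compact sets. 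Since Lebesgue measure on $[0,\infty)$ is $\sigma$-compact and $(X,F)$ has the Krein--Smulian property, Theorem \ref{5} produces the $F$-Pettis integrability of $f_A$ with integral in $\overline X = X$. This defines $R(\lambda)A \in X$, with boundedness of $R(\lambda)$ following verbatim from the estimate in Lemma \ref{13}.

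The heart of the argument is the continuity statement. I would fix $\eta \in F$ and note that, since each $T_t$ is $\sigma(X,F)$-$\sigma(X,F)$ continuous, $T_t^*\eta := \eta\circ T_t$ belongs to $F$, so the map $g_\eta(t) = e^{-\lambda t}T_t^*\eta$ takes values in $F$. The key move is to apply Theorem \ref{5} a second time, this time with the roles of the normed space and its norming subspace swapped: the canonical map $A \mapsto (\eta \mapsto \eta(A))$ identifies $X$ isometrically with a subspace of $F^*$ (the isometry holds precisely because $F$ norms $X$), and under this identification $X$ is itself a norming subspace of $F^*$ by the very definition of the $X^*$-norm on $F$. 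For $A \in X$ the scalar $A\circ g_\eta(t) = e^{-\lambda t}\eta(T_tA)$ is continuous in $t$, and $\|g_\eta(t)\| \leq M\|\eta\|e^{-(\Re\lambda - \omega)t}$ is an integrable dominant that is bounded on compact sets. Theorem \ref{5} applied to the pair $(F,X)$, which is Krein--Smulian by hypothesis, then provides an $X$-Pettis integral $\psi_\eta$ of $g_\eta$ lying in the completion of $F$, i.e.\ in $\overline F \subseteq X^*$. Evaluating at $A \in X$ yields $\psi_\eta(A) = \int_0^\infty e^{-\lambda t}\eta(T_tA)\,dt = \eta(R(\lambda)A)$, so $\eta\circ R(\lambda) = \psi_\eta \in \overline F$ for every $\eta \in F$. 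Since the continuous dual of $(X,\sigma(X,\overline F))$ is exactly $\overline F$, this is the desired $\sigma(X,\overline F)$-$\sigma(X,F)$ continuity of $R(\lambda)$.

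The principal obstacle is organizational: the proof requires two applications of Theorem \ref{5} with the roles of ``space'' and ``norming subspace'' exchanged between them, which is exactly why the lemma demands the Krein--Smulian property for both $(X,F)$ and $(F,X)$. One must also keep track of the fact that, since $F$ is not assumed norm-closed, the second Pettis integral lives only in $\overline F$ rather than in $F$; this is precisely what weakens the conclusion from the $\sigma(X,F)$-$\sigma(X,F)$ continuity obtained in Lemma \ref{13} to the $\sigma(X,\overline F)$-$\sigma(X,F)$ continuity asserted here.
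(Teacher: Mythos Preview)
Your proposal is correct and follows essentially the same route as the paper: apply Lemma \ref{14} for exponential boundedness, then invoke Theorem \ref{5} once with the pair $(X,F)$ to obtain the $F$-Pettis integrability of $t\mapsto e^{-\lambda t}T_tA$, and a second time with the pair $(F,X)$ applied to $t\mapsto e^{-\lambda t}T_t^*\eta$ to land $\eta\circ R(\lambda)$ in $\overline F$. You are in fact slightly more careful than the paper in spelling out why $X$ embeds isometrically as a norming subspace of $F^*$, which is needed to legitimately invoke Theorem \ref{5} the second time.
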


\begin{proof}
First, by Lemma \ref{14}, we obtain that $(T_t)_{t \geq 0}$ is exponentially bounded.  Thus there exists $M \geq 1$ and $\omega \in \R$ such that $\| T_t \| \leq Me^{\omega t}$ for all $t \geq 0$.  Then we fix $\lambda \in \C$ with $\Re{\lambda} > \omega$ and $A \in X$.  We claim that the function $[0, \infty) \ni t \mapsto e^{- \lambda t}T_tA$ is an $X$-valued $F$-Pettis integrable function with respect to Lebesgue measure.  Indeed we have that $\| e^{- \lambda t} T_tA \| \leq \| A \| Me^{(\Re{\lambda} - \omega ) t} \in L_1([0, \infty), dt)$.
Also, since $t \mapsto T_tA$ is $\sigma(X,F)$ continuous, it is $F$-measurable with respect to the Borel $\sigma$-algebra of $[0, \infty)$.  Finally $(X,F)$ satisfies the Krein-Smulian property.  Thus by Theorem \ref{5}, $t \mapsto e^{-\lambda t} T_tA$ is an $F$-Pettis integrable function with respect to the Lebesgue measure.  So, there exists $x_A \in X$ such that
$$ \eta(x_A) = \int_0^{\infty}e^{- \lambda t} \eta(T_tA)dt, \quad \text{ for all } \eta \in F.$$
Define $R(\lambda):X \rightarrow X$ by $R(\lambda)A=x_A$.  

Next notice that $R( \lambda ):X \rightarrow X$ is bounded since for $A \in X$ we have
$$
\| R(\lambda )A \|  = \|x_A \|
 = \sup_{\substack{ \eta \in F \\ \| \eta \|=1}} { \left| \eta(x_A) \right| } 
 =  \sup_{\substack{ \eta \in F \\ \| \eta \|=1}} { \left| \int_0^{\infty}e^{- \lambda t} \eta (T_tA) dt \right| }
 \leq \| A \| \int_0^{\infty} Me^{(\omega - \Re{\lambda})t}dt .
$$

Next, we show that $R(\lambda)$ is $\sigma (X,\overline{F})-\sigma(X,F)$ continuous where $\overline{F}$ is the closure of $F$ inside $X^*$.  Fix $\eta \in F$.  We need to show that $\eta \circ R(\lambda) \in \overline{F}$.  First, notice that
\begin{equation}\label{19.1}
 (\eta \circ R(\lambda))(A) = \int_0^{\infty}e^{-\lambda t} \eta (T_tA) dt = \int_0^{\infty}e^{-\lambda t} T_t^*(\eta)(A) dt, \quad \text{ for all } A \in X.
\end{equation}
Define $f:[0, \infty) \rightarrow F$ by $f(t)=e^{-\lambda t} T_t^*(\eta)$.  Observe that $f$ takes values in $F$ since $F$ is a linear subspace of $X^*$ and $(T_t)_{t \geq 0}$ is a $\sigma(X,F)$-semigroup.  Since $(F,X)$ has the Krein-Smulian property we obtain by Theorem \ref{5} that $f$ is an $X$-Pettis integrable function with integrals in $\overline{F}$ i.e. there exists $\gamma \in \overline{F}$ so that
$$A( \gamma )= \int_0^{\infty} e^{- \lambda t}A( \eta \circ T_t) dt, \quad \text{ for all } A \in X.$$
That is,
$$\gamma (A)= \int_0^{\infty}e^{- \lambda t} \eta (T_tA)dt= \eta \circ R(\lambda )(A), \quad \text{ for all } A \in X.$$
Hence $\eta \circ R(\lambda)= \gamma \in \overline{F}$ and therefore $R(\lambda)$ is $\sigma (X,\overline{F})-\sigma(X,F)$ continuous.
\end{proof}

The following result strengthens Proposition 3.1.4 of \cite{br}. 

\begin{thm}\label{19}
Let $X$ be a Banach space and $F$ be a linear (not necessarily closed) subspace of $X^*$ which is norming for $X$.  Assume that $(X,F)$ and $(F,X)$ satisfy the Krein-Smulian property.  Let $(T_t)_{t \geq 0}$ be an $\sigma(X,F)$-semigroup on $X$ which is $\sigma(X,F)$-continuous.  Then the generator $L:D(L) \rightarrow X$ of the semigroup  $(T_t)_{t \geq 0}$ is $\sigma(X,F)-\sigma(X,\overline{F})$ closed where $\overline{F}$ is the closure of $F$ inside $X^*$.
\end{thm}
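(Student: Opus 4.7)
The plan is to follow the same three-step template as the proof of Theorem \ref{15}, but substitute Lemma \ref{18.5} for Lemma \ref{13} so that the Krein-Smulian hypothesis replaces the Mazur hypothesis. The key observation is that Lemma \ref{18.5} delivers a resolvent that is $\sigma(X,\overline{F})$-$\sigma(X,F)$ continuous rather than $\sigma(X,F)$-$\sigma(X,F)$ continuous, and this asymmetry is precisely what yields the asymmetric closedness statement of Theorem \ref{19}.

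Step one: since $(T_t)_{t\geq 0}$ is $\sigma(X,F)$ continuous it is in particular $\sigma(X,F)$ continuous at zero, so Lemma \ref{14} applies and $(T_t)_{t \geq 0}$ is exponentially bounded. Step two: applying Lemma \ref{18.5} directly (its hypotheses coincide with those of Theorem \ref{19}), I obtain $\omega \in \R$ so that for every $\lambda \in \C$ with $\Re{\lambda}>\omega$ the function $t \mapsto e^{-\lambda t}T_tA$ is $F$-Pettis integrable for every $A \in X$, the operator
$$R(\lambda)A = (P)-\int_0^{\infty} e^{-\lambda t} T_tA \, dt$$
is bounded on $X$, and $R(\lambda)$ is $\sigma(X,\overline{F})$-$\sigma(X,F)$ continuous.

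Step three: the conclusion of Lemma \ref{18.5} is exactly the hypothesis needed to invoke Lemma \ref{14.5}; note that $F$ norms $X$, each $T_t$ is $\sigma(X,F)$-$\sigma(X,F)$ continuous, and the semigroup is $\sigma(X,F)$ continuous at zero, so Lemma \ref{14.5} applies and yields that every such $\lambda$ belongs to the resolvent set of $L$ with $R(\lambda) = (\lambda-L)^{-1}$. Therefore $(\lambda-L)^{-1}$ is $\sigma(X,\overline{F})$-$\sigma(X,F)$ continuous for every $\lambda$ in the half-plane $\Re{\lambda}>\omega$. Finally I would apply Remark \ref{remark2} with $F_1=\overline{F}$ and $F_2=F$: since $(\lambda-L)^{-1}$ is $\sigma(X,F_1)$-$\sigma(X,F_2)$ continuous on the relevant half-plane, Remark \ref{remark2} concludes that $L:D(L)\rightarrow X$ is $\sigma(X,F_2)$-$\sigma(X,F_1)$ closed, which is precisely $\sigma(X,F)$-$\sigma(X,\overline{F})$ closedness.

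There is no essential new obstacle to overcome once the preceding lemmas are in place; the proof is a straight chain-application of Lemma \ref{18.5}, Lemma \ref{14.5}, and Remark \ref{remark2}. The only care required is in tracking the directions of the two weak topologies (Remark \ref{remark2} swaps the roles of $F_1$ and $F_2$ when passing from resolvent continuity to generator closedness), and in checking that the output of Lemma \ref{18.5} exactly matches the Pettis-integrability hypothesis of Lemma \ref{14.5} so that the two can be composed without any hypothesis gap.
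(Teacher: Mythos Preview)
Your proposal is correct and follows essentially the same approach as the paper: apply Lemma \ref{14} for exponential boundedness, Lemma \ref{18.5} to obtain $R(\lambda)$ with $\sigma(X,\overline{F})$-$\sigma(X,F)$ continuity, Lemma \ref{14.5} to identify $R(\lambda)=(\lambda-L)^{-1}$, and Remark \ref{remark2} (with $F_1=\overline{F}$, $F_2=F$) to conclude closedness. Your explicit bookkeeping of the topology directions and of the hypothesis-matching between Lemmas \ref{18.5} and \ref{14.5} is accurate and mirrors exactly what the paper does.
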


\begin{proof}
First, by Lemma \ref{14}, we obtain that $(T_t)_{t \geq 0}$ is exponentially bounded.  Thus there exists $M \geq 1$ and $\omega \in \R$ such that $\| T_t \| \leq Me^{\omega t}$ for all $t \geq 0$.
By Lemma \ref{18.5}, the map $[0,\infty) \ni t \mapsto e^{-\lambda t}T_tA$ is $F$-Pettis integrable for all $A \in X$ and if we define $$R(\lambda)A=(P)- \int_0^{\infty}e^{- \lambda t}T_tA dt , \quad \text{for all } A \in X$$
then $R(\lambda)$ is $\sigma(X, \overline{F})$-$\sigma(X,F)$ continuous.
By Lemma \ref{14.5} we obtain that $R( \lambda )=( \lambda - L)^{-1}$.  Thus all $\lambda \in \C$ with $\Re{\lambda}> \omega$ belong to the resolvent of $L$ and for all such $\lambda$ we have that $(\lambda - L)^{-1}$ is $\sigma(X,\overline{F})-\sigma(X,F)$ continuous.  Hence by Remark \ref{remark2} we obtain that $L:D(L) \rightarrow X$, is $\sigma (X, F) -\sigma (X,\overline{F})$ closed.
\end{proof}

We can apply the previous theorem to a quantum Markov semigroup $(T_t)_{t \geq 0}$ where each $T_t$ is WOT-WOT continuous, that is, $\sigma(\mathcal{B}(\hi),F)$ continuous where $F$ is the space of finite rank operators on $\hi$.  In doing so we obtain the following result.

\begin{cor}\label{20}
Let $\hi$ be a Hilbert space and $(T_t)_{t\geq 0}$ be a WOT-semigroup on $\mathcal{B}(\hi)$ which is WOT continuous at $0$.  Then its generator is WOT-$\sigma$-weakly closed.  In particular, the generator of any quantum Markov WOT-semigroup is WOT-$\sigma$-weakly closed.
\end{cor}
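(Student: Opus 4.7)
The plan is to apply Theorem~\ref{19} with $X = \mathcal{B}(\hi)$ and $F$ equal to the space of finite-rank operators on $\hi$, viewed as a linear subspace of $X^*$ via trace duality $\eta(A) = \operatorname{tr}(\eta A)$. The WOT is generated by the seminorms $A \mapsto |\langle y,Ax\rangle| = |\operatorname{tr}(|x\rangle\langle y|A)|$, so $\mathrm{WOT} = \sigma(X,F)$, and accordingly the hypothesis ``WOT-semigroup which is WOT-continuous at $0$'' translates directly into the hypothesis of Theorem~\ref{19}. Moreover the norm closure $\overline{F}$ of $F$ inside $X^* = \mathcal{B}(\hi)^*$ is the trace class $L_1(\hi)$, since the finite-rank operators are trace-norm dense in $L_1(\hi)$ which embeds isometrically into $\mathcal{B}(\hi)^*$; hence $\sigma(X,\overline{F})$ is precisely the $\sigma$-weak topology, and the conclusion ``$\sigma(X,F)$--$\sigma(X,\overline{F})$ closed'' of Theorem~\ref{19} is exactly ``WOT--$\sigma$-weakly closed''.

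To apply Theorem~\ref{19} I would verify the remaining hypotheses in the following order. First, $F$ norms $X$: this is the standard equality $\|A\| = \sup\{|\langle y, Ax\rangle| : \|x\|, \|y\| \leq 1\}$. Second, the semigroup is $\sigma(X,F)$-continuous on all of $[0,\infty)$, which I would deduce from $\sigma(X,F)$-continuity at $0$ together with exponential boundedness (Lemma~\ref{14}), using the identity $T_{t+h}A - T_tA = T_t(T_h A - A)$ and the $\sigma(X,F)$-continuity of each $T_t$. Third, for the Krein-Smulian property of $(X,F)$ I would invoke the well-known fact that WOT and the $\sigma$-weak topology coincide on norm-bounded subsets of $\mathcal{B}(\hi)$; the classical Krein-Smulian theorem, applied to the Banach dual pairing $(L_1(\hi), \mathcal{B}(\hi) = L_1(\hi)^*)$, then produces a $\sigma$-weakly compact closed convex hull which, being bounded, is also WOT-compact.

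The main obstacle will be verifying the Krein-Smulian property for $(F,X)$, since $F$ is not norm-complete but only norm-dense in $L_1(\hi)$. A bounded $\sigma(F,X)$-compact subset $K \subseteq F$ is weakly compact inside the Banach space $L_1(\hi)$ (the $\sigma(F,X)$ topology on $F$ being exactly the restriction of the weak topology of $L_1(\hi)$), and Krein's theorem applied in $L_1(\hi)$ yields a weakly compact closed convex hull there; the delicate point is to control the $\sigma(F,X)$-closure of $\operatorname{co}(K)$ inside $F$ and to argue that it retains compactness. Once all hypotheses are secured, Theorem~\ref{19} immediately gives that the generator is WOT--$\sigma$-weakly closed, and the ``in particular'' statement follows because any quantum Markov WOT-semigroup is by hypothesis a WOT-semigroup, so the general result applies.
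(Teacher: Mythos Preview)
Your approach is the paper's: set $X=\mathcal B(\hi)$, let $F$ be the finite-rank operators, identify $\sigma(X,F)$ with WOT and $\sigma(X,\overline F)$ with the $\sigma$-weak topology, and invoke Theorem~\ref{19}. A few differences in the verification of the hypotheses are worth flagging. For the Krein--Smulian property of $(X,F)$ the paper argues more directly than you do: since the closed unit ball of $\mathcal B(\hi)$ is already WOT-compact, every norm-bounded WOT-closed subset of $\mathcal B(\hi)$ is WOT-compact, and the property is immediate---no detour through $\sigma$-weak compactness and the classical Krein--Smulian theorem is needed. For $(F,X)$ the paper simply writes ``by Alaoglu's Theorem''; your instinct that this is the delicate point is well placed, because the $\sigma(F,X)$-closure of $\mathrm{co}(K)$ must be taken inside the non-complete space $F$, and neither Alaoglu (which produces compacta in $\mathcal B(\hi)^*$) nor Krein's theorem in $L_1(\hi)$ obviously keeps you inside $F$. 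A clean way to close this, which your sketch nearly reaches, is to observe that the proof of Theorem~\ref{5} only requires a convex circled compact $K_0$ in the \emph{completion} $\overline F=L_1(\hi)$ for the Mackey--Arens step, and the classical Krein theorem in the Banach space $L_1(\hi)$ supplies exactly that; the resulting conclusion $B_E\in\overline F$ is all that Lemma~\ref{18.5} and Theorem~\ref{19} actually use. Finally, the paper does not comment on the passage from ``WOT-continuous at $0$'' (the hypothesis of Corollary~\ref{20}) to ``$\sigma(X,F)$-continuous on $[0,\infty)$'' (the hypothesis of Theorem~\ref{19}); your identity $T_{t+h}A-T_tA=T_t(T_hA-A)$ gives right-continuity, and for this particular $X$ the compactness of $f(R)$ needed in Theorem~\ref{5} can alternatively be obtained directly from WOT-compactness of norm-bounded sets, bypassing full continuity.
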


\begin{proof}
Let $X$ be $\mathcal{B}(\hi)$ and $F$ be the space of finite rank operators as a linear subspace of $\mathcal{B}(\hi)^*$.  Then $(X,F)$ satisfies the Krein-Smulian Property since the $\sigma(X,F)$ topology is the WOT topology on $\mathcal{B}(\hi)$ and the unit ball of $\mathcal{B}(\hi)$ is WOT compact.  Also $(F,X)$ satisfies the Krein-Smulian Property by Alaoglu's Theorem.  Thus we can simply apply Theorem \ref{19} to obtain that the generator $L:D(L) \rightarrow \mathcal{B}(\hi)$, is $\sigma (\mathcal{B}(\hi), F) -\sigma (\mathcal{B}(\hi),\overline{F})$ closed.  This gives the desired result since the $\sigma (\mathcal{B}(\hi), F)$ topology is the WOT topology and the $\sigma (\mathcal{B}(\hi),\overline{F})$ topology is the $\sigma$-weak topology (since the closure of the linear space of finite rank operators inside $\mathcal{B}(\hi)^*$ is equal to the closure of finite rank operators inside the predual of $\mathcal{B}(\hi)$).
\end{proof}

A similar result to Corollary \ref{20} can be stated when the WOT topology is replaced by the WOT(U) topology for some dense subspace $U$ of $\hi$.

The assumption that the quantum Markov semigroup be a WOT-semigroup is true for many examples.  We will look at two of them now.  The first is a basic example which gives a way to produce a QMS from a classical semigroup.

\begin{exm}\label{example1}
Let $(V_t)_{t \geq 0}$ be a strongly continuous contraction semigroup on a Hilbert space $\hi$.  Define $T_t: \mathcal{B}(\hi) \rightarrow \mathcal{B}(\hi)$ by 
$$T_tA=V_tAV_t^* \quad , \quad \text{for all } A \in \mathcal{B}(\hi) .$$
It is easy to see that $(T_t)_{t \geq 0}$ is a QMS.  To show that for a fixed $t \geq 0$, $T_t$ is WOT-WOT continuous, let $(A_{\lambda})_{\lambda} \subseteq \mathcal{B}(\hi)$ such that $A_{\lambda}\xrightarrow{WOT} A$ for some $A \in \mathcal{B}(\hi)$.  Then, for any $x,y \in \hi$,
$$\la x,T_tA_{\lambda}y \ra = \la x , V_tA_{\lambda}V_t^*y \ra = \la V_t^*x, A_{\lambda}V_t^*y \ra \rightarrow \la V_t^*x,AV_t^*y \ra = \la x, T_tA y \ra .$$
Therefore $(T_t)_{t \geq 0}$ is a WOT-semigroup.
\end{exm}

The next example is given in \cite{arveson} and similar examples where produced in \cite{fagnola2} and \cite{powers}.

\begin{exm}\label{4.3}
Let $\hi = L_2[0, \infty)$ and define $V_t: \hi \rightarrow \hi$ by 

\[
(V_tg)(x)=\left\{ \begin{array}{lr}
g(x-t) & \text{if }x \geq t \\
0 & \text{otherwise}
\end{array}
\right.
\]

Then $(V_t)_{t \geq0}$ is a strongly continuous semigroup of isometries whose generator is the differentiation operator.  Let $f \in L_2[0, \infty)$ be what we get by normalizing $u(x)=e^{-x}$ (i.e. $f=\frac{u}{\| u \|}$).  Then define $\omega : \mathcal{B}(\hi) \rightarrow \C$ by $\omega (A)= \la f,Af \ra$.  Define the completely positive maps $T_t: \mathcal{B}(\hi) \rightarrow \mathcal{B}(\hi)$ where
$$T_t(A)= \omega(A)E_t+V_tAV_t^*$$ 
for all $t \geq 0$ where $E_t$ is the projection onto the subspace $L_2[0, t) \subseteq L_2[0, \infty)$.  Then $( T_t)_{t \geq 0}$ is a QMS.  Let $t \geq 0$.  To see that $T_t$ is WOT-WOT continuous, let $(A_{\lambda})_{\lambda} \subseteq \mathcal{B}(\hi)$ such that $A_{\lambda}\xrightarrow{WOT} A$ for some $A \in \mathcal{B}(\hi)$.  Then, for any $h,g \in \hi$,
\begin{align*}
\la h, T_t (A_{\lambda})g \ra & = \la h , \omega (A_{\lambda})E_t g \ra + \la h, V_tA_{\lambda}V_t^*g \ra
\\ & = \la f , A_{\lambda}f \ra \la h, E_t g \ra + \la V_t^*h,A_{\lambda}V_t^*g \ra
\\ & \rightarrow \la f , Af \ra \la h, E_t g \ra + \la V_t^*h,AV_t^*g \ra
\\ &= \la h, T_t(A)g \ra .
\end{align*}
Therefore $T_t$ is WOT-WOT continuous.
\end{exm}

\end{spacing}


\begin{thebibliography}{}

\bibitem{az} Androulakis, G. and Ziemke, M.:
{Generators of Quantum Markov Semigroups}.
 Submitted.  The article can be viewed at http://people.math.sc.edu/androula/research.html

\bibitem{arveson} Arveson, W.:
{The Domain Algebra of a CP-Semigroup}.
Pac. J. Math. \textbf{203(1)} , 67-77 (2002)

\bibitem{br} Bratteli, O. and Robinson, D.:
{Operator Algebras and Quantum Statistical Mechanics I}.
 Springer-Verlag, New York (1979)

\bibitem{cmv} Cascales, B., Manjabacas, G. and Vera, G.:
{A Krein-Smulian Type Result in Banach Spaces}.
Q. J. Math. \textbf{48(2)}, 161-167 (1997)

\bibitem{davies1} Davies, E.B.:
{Quantum Dynamical Semigroups and the Neutron Diffusion Equation}.
Rep. Math. Phys. \textbf{11(2)}169-188 (1977)

\bibitem{davies2} Davies, E.B.:
{Generators of Dynamical Semigroups}
J. Funct. Anal. \textbf{34}, 421-432 (1979)

\bibitem{du} Diestel, J. and Uhl, J. J.:
{Vector Measures}.
American Mathematical Society, Providence (1977)

\bibitem{ds} Dunford, N. and Schwartz, J.:
{Linear Operators- Part I: General Theory}.
 Interscience Publishers, Inc., New York (1967)

\bibitem{edgar} Edgar, G.:
{Measurability in Banach Space II}.
Indiana Univ. Math. J. \textbf{28}, 559-579 (1979)

\bibitem{fagnola2} Fagnola, F.:
{A Simple Singular Quantum Markov Semigroup}.
In: Rebolledo, R. (eds.)
{Stochastic Analysis and Mathematical Physics}, pp. 73-87.
Birkhauser, Boston (2000)

\bibitem{gs} Granero, A.S. and Sanchez, M.:
{The Class of Universally Krein-Smulian Banach Spaces}.
Bull. Lond. Math. Soc. \textbf{39(4)}, 529-540 (2007)

\bibitem{holevo} Holevo, A. S.:
{On the Structure of Covariant Dynamical Semigroups}.
J. Funct. Anal. \textbf{131}, 255-278 (1995)

\bibitem{huff} Huff, R.:
{Remarks on Pettis Integrability}.
Proc. Amer. Math. Soc. \textbf{96(3)}, 402-404 (1986)

\bibitem{kr} Kadison, R. and Ringrose, J.:
{Fundamentals of the Theory of Operator Algebras; Volume I- Elementary Theory}.
Academic Press, London (1983)

\bibitem{kraus} Kraus, K.:
{General State Changes in Quantum Theory}.
Ann. Phys. \textbf{64},311-335 (1970)

\bibitem{kunze} Kunze, M.:
{A Pettis-Type Integral with Applications to Transition Semigroups}.
Czechoslovak Math. J. \textbf{61(2)}, 267-288 (2011)

\bibitem{lindblad} Lindblad, G.:
{On the Generators of Quantum Dynamical Semigroups}.
Commun. math. Phys. \textbf{48},119-130 (1976)

\bibitem{powers} Powers, R. T.:
{New Examples of Continuous Spatial Semigroups of Endomorphisms of $\mathcal{B}(\hi)$}.
Int. J. Math. \textbf{10},215-288 (1999)

\bibitem{sw}  Schaefer, H.H., in assistance with Wolfe, M.:
{Topological Vector Spaces}.
Springer-Verlag, New York (1999)

\bibitem{stefannson} Stef{\'a}nsson, G.F.:
{Pettis Integrability}
Trans. Amer. Math. Soc. \textbf{330(1)}, 401-418 (1992)

\bibitem{stinespring} Stinespring, W. F.:
{Positive Functions on $C^*$-algebras}.
Proc. Amer. Math. Soc., 211-216 (1955)

\bibitem{wilansky} Wilansky, A.:
{Topics in Functional Analysis}.
  Notes by W.D. Laverell.  Lecture Notes in Mathematics, No. 45,
  Springer-Verlag, Berlin-New York (1967)

\end{thebibliography}
\end{document}